\begin{document}

\newcommand\GSP{{\mathfrak {GSp}}}
\newcommand\SP{{\mathrm {Sp}}}
\newcommand\A{\mathbb{A}}
\newcommand\G{\mathbb{G}}
\newcommand\N{\mathbb{N}}
\newcommand\T{\mathbb{T}}
\newcommand\sO{\mathcal{O}}
\newcommand\sE{{\mathcal{E}}}
\newcommand\tE{{\mathbb{E}}}
\newcommand\sF{{\mathcal{F}}}
\newcommand\sG{{\mathcal{G}}}
\newcommand\GL{{\mathrm{GL}}}
\newcommand\GS{{\mathrm{GSp}}}
\newcommand\HH{{\mathrm H}}
\newcommand\mM{{\mathrm M}}
\newcommand\fS{\mathfrak{S}}
\newcommand\fP{\mathfrak{P}}
\newcommand\fQ{\mathfrak{Q}}
\newcommand\Qbar{{\bar{\bf Q}}}
\newcommand\sQ{{\mathcal{Q}}}
\newcommand\sP{{\mathbb{P}}}
\newcommand{\Q}{{\bf Q}}
\newcommand{\tH}{\mathbb{H}}
\newcommand{\Z}{{\bf Z}}
\newcommand{\R}{{\bf R}}
\newcommand{\C}{{\bf C}}
\newcommand{\g}{{\mathfrak{g}}}
\newcommand{\F}{\mathbb{F}}
\newcommand\gP{\mathfrak{P}}
\newcommand\Gal{{\mathrm {Gal}}}
\newcommand\SL{{\mathrm {SL}}}
\newcommand\SO{{\mathrm {SO}}}
\newcommand\gl{{\mathfrak {gl}}}
\newcommand\Gsp{{\mathrm {Gsp}}}
\newcommand\So{{\mathrm {Sp}}}
\newcommand\Sl{{\mathfrak {sl}}}
\newcommand\Sp{{\mathfrak {sp}}}
\newcommand\Hom{{\mathrm {Hom}}}
\newcommand{\legendre}[2] {\left(\frac{#1}{#2}\right)}
\newcommand\iso{{\> \simeq \>}}
\newcommand\Frob{{\mathrm {Frob}}}
\newtheorem{thm}{Theorem}
\newtheorem{theorem}[thm]{Theorem}
\newtheorem{cor}[thm]{Corollary}
\newtheorem{conj}[thm]{Conjecture}
\newtheorem{prop}[thm]{Proposition}
\newtheorem{lemma}[thm]{Lemma}
\theoremstyle{definition}
\newtheorem{definition}[thm]{Definition}
\theoremstyle{remark}
\newtheorem{remark}[thm]{Remark}
\newtheorem{example}[thm]{Example}
\newtheorem{claim}[thm]{Claim}

\newtheorem{lem}[thm]{Lemma}

\theoremstyle{definition}
\newtheorem{dfn}{Definition}

\theoremstyle{remark}
\setlength{\abovedisplayskip}{2pt}
\setlength{\belowdisplayskip}{2pt}

\theoremstyle{remark}
\newtheorem*{fact}{Fact}
% type user-defined commands here
\makeatletter
\def\imod#1{\allowbreak\mkern10mu({\operator@font mod}\,\,#1)}
\makeatother
 \subjclass[2000]{Primary: 11F11, Secondary: 11F80, 11F30}
\keywords{Modular forms, Brauer class}
\theoremstyle{remark}
%\newtheorem*{fact}{Fact}
% type user-defined commands here
\makeatletter
\def\imod#1{\allowbreak\mkern10mu({\operator@font mod}\,\,#1)}
\makeatother
\title{Siegel modular forms with extra twists}

\author{Debargha Banerjee and  Ronit Debnath}
\address{INDIAN INSTITUTE OF SCIENCE EDUCATION AND RESEARCH, PUNE, INDIA}
\thanks{The first named author was partially supported by the SERB grant MTR/2017/000357 and CRG/2020/000223. We thank Professor Abhishek Saha for fruitful email correspondence in the initial stage of the project.  }

 \subjclass[2000]{Primary: 11F46, Secondary: 11F80, 11F30}
\keywords{Siegel Modular forms, Yoshida lifts}

\maketitle
\begin{abstract}

In this paper, we study Siegel modular forms with  extra twists. We provide conditions  on the level and genus of the forms that is necessary for the existence of extra twists for Siegel modular forms.  We also give explicit examples of Siegel modular forms with extra twists that are different from the complex conjugation. 
\end{abstract}

\section{Introduction}
Let $f=\sum\limits_{n=1} a_n q^n \in S^1_k(N,\epsilon)$ be a classical {\it elliptic} normalized cuspidal newform of weight $k \geq 2$, genus $1$ and nebentypus character $\epsilon$ and $K=\Q(a_n)$ be the number field generated by the Fourier coefficients of $f$ with $[K:\Q]=d$. For all primes $l$ with $\lambda \mid l$ , we can associate a compatible system of $\lambda$ -adic Galois representation attached to this elliptic modular form $f$ by the classical result due to Shimura, Deligne, 
\[
\rho_{f,\lambda}:G_\Q\rightarrow \GL(V_\lambda)
\]
with $\mathrm{trace}(\rho_{f,\lambda}(\Frob_p))=a_p$ and $\det(\rho_{f,\lambda})=\epsilon(p)p^{k-1}$, for all primes $p \nmid lN$. Donote by $\rho_{f, l}:=\prod_{\lambda \mid l} \rho_{f,\lambda}$ and $V_l:=\prod_{\lambda \mid l}V_{\lambda}$. 

For the last fifty years, there is a considerable interest to study the image of these Galois representations, namely $\rho_{f,l}(G_{\Q})$ with $G_{\Q}$ given Krull topology and $\GL(V_{\lambda})$  usual $l$-adic topology. In turn,  this has  several important arithmetical applications including the proof of the Fermat's last theorem.  In this study, certain symmetries of modular forms called {\it  inner twists} or {\it extra twists} are important ingredients. Ken Ribet pioneered the study of modular forms of extra twists starting with the existence of the same~\cite[p. 48]{MR0441866}. In this case, the existence is guranteed by the work of Doi-Yamauchi, Birch and Koike.

 For $k=2$, Ribet and Momose \cite{MR617867} studied the algebra $\mathrm{Lie}(\rho_{f,l}(G_{\Q}))$ and showed that this is an explicit central simple algebra. 
Eknath Ghate and his collaborators (cf. ~\cite{MR2038777}, \cite{MR2146605}, \cite{MR3096563}) studied this algebra for higher weights $k \geq 2$  and they give a formulae for local algebras in terms of slopes of modular forms (cf. \cite{MR3096563} for details). In the the recent past, there is a  progress 
regarding the Galois representations associated with more general automorphic forms using the discovery of perfectoid spaces by Scholze~\cite{MR3418533}.

The aim of this paper is to extend results to the automorphic forms for the symplectic group $\Gsp_{2g}/\Q$ with genus $g \geq 2$. We extend a result of Ribet \cite[pg. 49 Theorem 5.7]{MR0441866} to automorphic forms on the group $\Gsp_{2g}/\Q$ with $g \geq 2$ and give explicit examples of Siegel modular forms with extra twists.  

Using perfectoid spaces and building on work of Harris- Taylor-Thorne-Clozel-Shin and many others, we associate to every Siegel modular forms $F$ of arbitrary genus $g$, the compatible system of $\lambda$ -adic  Galois representation of Peter Scholze~\cite[p. 1034, Theorem 5.1.4]{MR3418533}
\[
\rho_{F,\lambda} : G_{\Q} \rightarrow \GS_{2g}(K_{\lambda})
\]
  where $K=\Q(t_p)$ is the number field obtained by adjoining the Hecke eigenvalues $t_p$ of $F$ for all $p \in \N$. These Galois representations are continuous, semi-simple and they encode the Satake parameters of the automorphic representation associated to $F$. 

In our first main theorem, we encapsulate a condition on the genus $g$, weight $k$ and level $N$ of the Siegel modular form $F$ which is sufficient but not necessary for this Siegel modular form to have an extra twist. The level condition requires a high prime power divisibility.  This is not so surprising even for $g=1$, modular forms with extra twists are more the norm than the exception in the case of a high prime power.

 Denote by $\rho_l:= \prod_{\lambda \mid l} \rho_{F,\lambda} $, $\g_l=\mathrm{Lie}(\rho_{F,l}(G_\Q))$, $\mathfrak{gsp}_{2g}=\mathrm{Lie}(\Gsp_{2g})$ and 
 \[
 \mathfrak{a}_l = \{u\in \mathfrak{gsp}_{2g}(K \otimes \Q_l)| \mathrm{Tr}(u) \in \Q_l \};
 \]
 where $\mathrm{Tr}(u)$ denotes the trace of the matrix $u$. 
 Note traces as sufficient to determine the representations as being isomorphic \cite[p. 11]{MR1484415}. 
 %We have a similar theorem for Siegel modular forms: 
 We now state our first theorem:
 \begin{theorem}
\label{maintheoremn}
Let $F$ be  a {\it non-CM} (cf. Definition~\ref{cmdef}) Siegel Modular form of genus $g \geq 2$, weight $k$, level $N$  such that $|g-k|$ is odd.  Let $N$  be chosen such that $(\Z/N\Z)^{\times}$ has an element of order $2g$. The Siegel modular forms $F$ admits an extra twist
if and only if we then have a strict inclusion   $\mathfrak{g}_l \subsetneq \mathfrak{a}_l$  . 
\end{theorem}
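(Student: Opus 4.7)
The plan is to adapt the Ribet--Momose analysis of the Lie algebra of the image of Galois \cite{MR617867} from the elliptic case to the $\GS_{2g}$ setting, applied to Scholze's compatible system $\rho_{F,\lambda}$. The inclusion $\g_l \subseteq \mathfrak{a}_l$ is automatic: $\mathrm{Tr}(\rho_{F,\lambda}(\Frob_p)) = t_p \in \Q$ for unramified $p$, so by Chebotarev density the trace of every element of the image lies in $\Q_l \subset K\otimes \Q_l$, and this passes to $\g_l$.

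For the forward implication, an extra twist is a pair $(\sigma,\chi)$ with $\sigma \in \mathrm{Aut}(K/\Q)$ non-trivial and $\chi$ a finite-order Dirichlet character satisfying $\rho_{F,\lambda}^\sigma \cong \rho_{F,\lambda} \otimes \chi$, realised by an intertwining matrix $A \in \GS_{2g}(K_\lambda)$. Since $\chi$ has finite order, it acts trivially at the Lie-algebra level, and the twisted map $x \mapsto A \sigma(x) A^{-1}$ defines a non-trivial automorphism of $\mathfrak{a}_l$ preserving $\g_l$. Its fixed subspace in $\mathfrak{a}_l$ is proper whenever $\sigma \neq 1$, so $\g_l \subsetneq \mathfrak{a}_l$.

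For the converse, assume strict inclusion. Since $F$ is non-CM, $[\g_l,\g_l]$ is a non-trivial semisimple Lie subalgebra of $\mathfrak{sp}_{2g}(K\otimes\Q_l)$, and I would classify the possible ways such a subalgebra can sit inside $\mathfrak{a}_l$ while satisfying the trace-rationality constraint. Combining a Bogomolov-type openness statement for the image with the structure theory of $\mathfrak{sp}_{2g}$, every proper inclusion is forced by a finite group of outer symmetries mixing $\mathrm{Aut}(K/\Q)$ with characters of $G_\Q$; selecting a non-trivial $\sigma$ in this group yields a candidate character $\chi$ on Frobenius via $\sigma(t_p) = \chi(p) t_p$. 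The parity assumption $|g-k|$ odd ensures compatibility with the similitude (weight) character, so that $\chi$ cannot be absorbed into the central character, and the order-$2g$ element in $(\Z/N\Z)^\times$ supplies a Dirichlet character of the required order at level $N$, yielding a genuine extra twist.

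The main obstacle is the converse direction: promoting a Lie-algebra constraint to an honest isomorphism $\rho_{F,\lambda}^\sigma \cong \rho_{F,\lambda}\otimes\chi$ of Galois representations. In the elliptic case this is handled by Bogomolov's openness theorem together with the explicit $\mathfrak{sl}_2$-structure; for $\GS_{2g}$ one must additionally exclude sporadic semisimple subalgebras of $\mathfrak{sp}_{2g}$ that could force strict inclusion without producing a twist, track the similitude character carefully (the point at which the $|g-k|$ parity is needed), and descend the Lie-algebra symmetry to an algebraic-group automorphism and then to a Dirichlet character using the hypothesis that $(\Z/N\Z)^\times$ contains an element of order $2g$.
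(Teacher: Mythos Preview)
Your proposal has the right overall shape (Ribet--Momose adapted to $\GS_{2g}$), but there are concrete gaps and one outright error.

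First, the argument for the automatic inclusion $\g_l \subseteq \mathfrak{a}_l$ is wrong as stated: you write ``$\mathrm{Tr}(\rho_{F,\lambda}(\Frob_p)) = t_p \in \Q$'', but by definition $K = \Q(t_p)$, so the $t_p$ are \emph{not} rational in general. The inclusion does hold, but it requires the preliminary reduction in the paper: Proposition~\ref{genusneigen} and Corollary~\ref{eigen} use the parity condition on $|g-k|$ and the existence of an order-$2g$ element in $(\Z/N\Z)^\times$ to produce a $\sigma \in \ker(\psi)$ with distinct eigenvalues, and then descend the image to $\GS_{2g}(L\otimes\Q_l)$ with $L = K^{\langle\sigma\rangle} \subset \R$. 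This is precisely where those hypotheses enter --- not, as you suggest, to ``supply a Dirichlet character of the required order'' or to handle absorption into the similitude character.

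Second, your converse direction is too vague and misses the actual mechanism. The paper does not classify semisimple subalgebras of $\mathfrak{sp}_{2g}$ or invoke a Bogomolov-type statement. Instead it reformulates the theorem as the equivalence of three statements and runs Goursat's lemma on the projection of $\g_l' \subset \mathfrak{gsp}_{2g}(V_\sigma)\times\mathfrak{gsp}_{2g}(V_\tau)$ to each factor (surjectivity being Proposition~\ref{surj}). Strict inclusion forces the Goursat kernels to be trivial (Lemma~\ref{graph}), so $\g_l'$ is the graph of an isomorphism $\mathfrak{gsp}_{2g}(V_\sigma)\to\mathfrak{gsp}_{2g}(V_\tau)$; since any such automorphism is inner, one gets an isomorphism of $H_0$-representations for an open $H_0 \subset G_\Q$, and Schur's lemma upgrades this to $\rho_\tau \cong \rho_\sigma \otimes \phi$ for a finite-order $\phi$. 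The forward direction (extra twist $\Rightarrow$ strict inclusion) is handled by taking determinants: $\det(\rho_\sigma\rho_\tau^{-1}) = \phi^{2g}$ has finite image, which immediately gives $\g_l \subsetneq \mathfrak{a}_l$. Your automorphism/fixed-subspace argument for this direction is plausible but more elaborate than needed.
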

The sufficiency of the conditions in the theorem above stems from Proposition~\ref{genusneigen}.  
From the Petersson inner product, $(c,\epsilon^{-1})$ is always an extra twist for the Siegel modular forms. It is natural to ask if there are extra twists of Siegel modular forms {\it different} from complex conjugation.

 In our  second theorem, we explicitly produce Siegel modular forms with extra twists that are not coming from complex conjugation. Start with two classical elliptic modular form $(f,g)$ with $f \in S^1_{k_1}(N, \epsilon)$, 
 $g \in S^1_{k_2}(N, \epsilon)$.  Using a suitable embedding
 $ \GL_2 \times \GL_2 \xhookrightarrow{ }\GS_4$, it is possible to  produce a Siegel modular form $Y(f \otimes g)$ that is a Yoshida lift of $(f,g)$.

 For two carefully chosen classical modular forms $f$ and $g$, their Yoshida lift denoted by $Y(f \otimes g)$ is a Siegel modular form with an extra twist.
 \begin{theorem}
 \label{extratwisttwo}
Let $f$ and $g$ be two classical non-CM elliptic modular forms with extra twist associated to the same Dirichlet character $\chi$ different from complex conjugation. Assume that the Yoshida lift 
$Y(f \otimes g)$ of $(f,g)$ 
exists, then $Y(f \otimes g)$ is a Siegel modular form that contains extra twists different from complex conjugation.   
\end{theorem}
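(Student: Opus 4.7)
The plan is to leverage the structural property of the Yoshida lift, namely that after semisimplification $\rho_{F,\lambda}^{\mathrm{ss}} \cong \rho_{f,\lambda} \oplus \rho_{g,\lambda}$ for $F = Y(f \otimes g)$, and to glue the given extra twists of $f$ and $g$ into a single extra twist of $F$. In particular, for almost every prime $p$ the Hecke eigenvalue of $F$ is $t_p = a_p(f) + a_p(g)$, so the coefficient field $K_F = \Q(t_p)$ sits inside the compositum $K_f \cdot K_g$.

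Next, I would unpack the extra twist hypothesis as automorphisms $\sigma_f \in \mathrm{Aut}(K_f/\Q)$ and $\sigma_g \in \mathrm{Aut}(K_g/\Q)$ satisfying
\[
\sigma_f(a_p(f)) = \chi(p)\, a_p(f), \qquad \sigma_g(a_p(g)) = \chi(p)\, a_p(g)
\]
for almost all $p$. If there exists a single automorphism $\tau \in \mathrm{Aut}(K_f \cdot K_g / \Q)$ restricting to $\sigma_f$ on $K_f$ and to $\sigma_g$ on $K_g$, then its restriction to $K_F$ is an extra twist of $F$, since
\[
\tau(t_p) = \sigma_f(a_p(f)) + \sigma_g(a_p(g)) = \chi(p)(a_p(f) + a_p(g)) = \chi(p)\, t_p.
\]
Combined with the semisimplicity of $\rho_{F,\lambda}$ and the Chebotarev density theorem, this upgrades to an isomorphism $\rho_{F,\lambda} \otimes \chi \cong \rho_{F,\lambda}^{\tau}$.

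Such a common lift $\tau$ exists if and only if $\sigma_f$ and $\sigma_g$ agree on the intersection $K_f \cap K_g$, and this compatibility check is the principal obstacle. I would attempt to verify it using the fact that both $\sigma_f$ and $\sigma_g$ are associated with the \emph{same} Dirichlet character $\chi$: their action on the cyclotomic subfield $\Q(\chi) \subseteq K_f \cap K_g$ is pinned down by the extra twist identity, and any residual discrepancy on the rest of $K_f \cap K_g$ would translate into a nontrivial self-twist of $\rho_f$ or $\rho_g$ by a nontrivial character, contradicting the non-CM hypothesis. In well-behaved situations, e.g.\ $K_f = K_g$ or $K_f \cap K_g = \Q(\chi)$, the compatibility is transparent; the general case will require a careful comparison of inner-twist data in the spirit of Ribet--Momose.

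Finally, to see that the extra twist $(\chi,\tau)$ produced above is distinct from complex conjugation, I would invoke the fact that complex conjugation corresponds to $(c,\epsilon^{-1})$ with $\epsilon$ the common nebentypus of $f$ and $g$ (and hence of $F$), while the hypothesis built into the theorem is precisely that the extra twists of $f$ and $g$ by $\chi$ are distinct from complex conjugation, so in particular $\chi \neq \epsilon^{-1}$. Thus $(\chi,\tau) \neq (c,\epsilon^{-1})$, yielding an extra twist of $Y(f \otimes g)$ distinct from complex conjugation.
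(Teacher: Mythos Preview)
Your approach is essentially the same as the paper's: both exploit the decomposition $\rho_F \cong \rho_f \oplus \rho_g$ coming from the Yoshida lift and verify that the extra twist relation passes from the summands to the sum. The paper compares the full characteristic polynomials of $\gamma(\rho_F)$ and $\rho_F \otimes \chi_\gamma$ and invokes Brauer--Nesbitt, while you compare traces $t_p = a_p(f)+a_p(g)$ and invoke Chebotarev plus semisimplicity; these are equivalent routes to the same conclusion.

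There is one genuine difference worth noting. You isolate the key technical point---the existence of a single automorphism $\tau$ of the compositum $K_f\cdot K_g$ restricting to $\sigma_f$ on $K_f$ and to $\sigma_g$ on $K_g$---and correctly observe that this amounts to checking that $\sigma_f$ and $\sigma_g$ agree on $K_f\cap K_g$, proposing to resolve it via the non-CM hypothesis (since for non-CM forms the character determines the inner-twist automorphism uniquely). The paper, by contrast, simply \emph{asserts} the existence of a common $\gamma\in\mathrm{Aut}(K_f\cdot K_g)$ satisfying both twist identities simultaneously, without addressing this compatibility. So on this point your treatment is more scrupulous than the paper's, though your sketch of the resolution (``any residual discrepancy\dots would translate into a nontrivial self-twist'') would still need to be made precise in the general case, as you yourself acknowledge.
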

The above theorem says that the group of extra twists can be really large. The method of this theorem also provides a systematic way of producing large class of examples of Siegel modular forms with extra twists.  In fact, we give few examples of explicit Siegel modular forms with  extra twists. We construct these examples by taking the Yoshida lift of two classical modular forms of the same level and same central nebentypus character. For families of modular forms, group of extra twists are studied by Conti~\cite{MR3932577} in a recent paper. 
 
  Kumar et al use the extra twists for Siegel modular forms with artimetic applications in the context of Lang-Trotter conjecture~\cite{kumar2022lang}. 
  It is worth mentioning that as of now it is very hard to compute Fourier coefficients and hence Hecke eigenvalue for Siegel modular forms for congruence subgroups even for $g=2$.
  There are several ways to produce Siegel modular forms using various lifts like symmetric cube, Saito-Kurakawa or Yoshida lifts. 
  It will be really intriguing to find {\it extra twists} of non-lifted Siegel modular forms that are not coming from complex conjugation. 
\section{Siegel Modular Forms}
\subsection{Motivation} 
Consider the case of classical elliptic modular forms. 
These are differential forms on the space obtained by the action of $\SL_2(\Z)$ on the upper half plane $\mathbb{H}$. 
Recall, the upper half plane can be expressed in terms of the group $\SL_2(R)/\SO(2)$, where $\SO(2)=U(1)$,  is the stabilizer of the point $i=\sqrt{-1}$. This is a maximal compact subgroup. The group $\SL_2(\Z)$ is the automorphism group of the lattice $\Z^2$ with the standard alternating inner product $<,>$ defined as:
$$<(a,b),(c,d)>=ad-bc.$$
We wish to generalise by taking for $g=2$ (the same generalisation works for arbitrary $g$). The lattice $(\Z^2)^2$ of rank $2 \times 2$, with basis $e_1,e_2,f_1,f_2$ provided with the symplectic form $<,>$ defined by : 
$$<e_i,e_j>=0,<f_i,f_j>=0,<e_i,f_j>=\delta_{ij}$$ with $\delta_{ij}$ being the Kronecker's delta. The symplectic group $\So_4(\Z)$ is by definition the automorphism group of the symplectic lattice 

$\So_4(\Z):=Aut(\Z^4,<,>)$. By using the basis of the $e's$ and $f's$ we can write the elements as group of matrices:
$\begin{bmatrix}
 A & B \\
 C & D
\end{bmatrix}$
where $A,B,C$ and $D$ are integral $2 \times 2$ matrices satisfying the following conditions. 

\subsection{Siegel modular group} Let $R$ be a commutative ring with $1$ and fix an integer $g \geq 2$. Denote by $I_g$ and $0_g$ be the identity and zero matrix of the ring $M_{g}(R)$ and now consider the matrix $J_g= 
\begin{bmatrix}
 0_g & I_g \\
 -I_g & 0_g 
\end{bmatrix}$. 
Denote by $\GS_{2g}(R)$ the algebraic group of symplectic similitudes with respect to $J_g$. Hence, $$\GS_{2g}(R)=\{M \in \GL_{2g}(R)|M^tJM=\mu(M)J\}.$$ The map $M \rightarrow \mu(M)$ defines a character $\mu: \GS_{2g}(R) \rightarrow R^{\times}$. We refer to $\mu$ as the similitude factor. The group $$\So_{2g}(R)=\{M \in \GL_{2g}(R)|M^tJM=J\}$$ is called the symplectic group of degree $2g$ with coefficients in $R$. Since the above set is the automorphism group of the alternating skew-symmetric form defined by $J_g$, hence $\So_{2g}(R)$ is a subgroup of $\GL_{2g}(R)$. For this article, we are mostly interested in $R=\Z$ or $\R$.

\begin{definition}\cite[Page 2]{KaupKaup+1983}
    For an open set $D \subset \C^{g}$, a function $F:D \rightarrow{\C}$ is called holomorphic if it is continuous and if for each fixed $(z_1^0,...,z_g^0) \in D$, and each $j=1,...,g$, the function of a single variable which is determined by the assignment $z_j \rightarrow F(z_1^0,...,z_{j-1}^0,z_j,z_{j+1}^0,...,z_g)$ is holomorphic. 
\end{definition}

The upper half plane for arbitrary genus $g \geq 1$ is the set $\mathbb{H}_g=\{X+iY |X,Y \in M_g(\C), X=X^t, Y=Y^t, Y>0\}$. The condition $Y>0$ means it is positive definite as a matrix. 
We now  define the vector valued Siegel modular form in arbitrary genus $g >1$ here and subsequently state the more explicit definition in genus $g=2$.

Suppose $\rho:\GL_g \rightarrow \GL(V)$ be a finite dimensional complex representation. Then by a vector valued Siegel modular form of genus $g$, we mean a holomorphic function $F : \mathbb{H}_g \rightarrow \C$ such that $F((A\tau+B)(C\tau+D)^{-1})=\rho(C\tau+D)F(\tau)$ for all 
$\gamma=\begin{bmatrix}
A & B \\
C & D
\end{bmatrix} \in \SP_{2g}(\mathbb{\Z})$ and $\tau \in \mathbb{H}_g$. 

The vector valued Siegel modular form of genus $2$  can be described more explicitly \cite{MR709851}. The scalar valued Siegel modular forms are just a special case of that. 
For non negative integers $k$ and $j$, let $\rho_{k,j}:\GL_2(\C)\rightarrow \GL_{j+1}(\C)$ be the irreducible representation of signature $(j+k,k)$; $$\rho_{k,j}=det^k \otimes Sym^j.$$

We now define the slash operator on a function $F : \mathbb{H}_{g} \rightarrow \C$. For a matrix 
$\gamma=\begin{bmatrix}
A & B \\
C & D
\end{bmatrix}  
\in \SP_{2g}(\Z)$, and $\tau \in \mathbb{H}_{g}$, we have the action $$(F|_{k,j}(\gamma))(\tau)=(\rho_{k,j}(C\tau+D))^{-1}F((A\tau+B)(C\tau+D)^{-1}).$$ 
\begin{definition}
A holomorphic function $F:\mathbb{H}_2 \rightarrow \C$ is called a vector valued Siegel modular form with respect to the full subgroup $\SP_4(\Z)$ and weight $\rho_{k,j}$ if $F|_{k,j}[\gamma]=F$ for all $\gamma \in \SP_4(\Z)$.  
\end{definition}

The case that $j=0$, we are mainly concern with this case in this paper and such Siegel modular forms are called scalar valued. So henceforth when we say Siegel modular forms we mean scalar valued ones. 
Recall the following definition of Siegel modular forms (respectively cusp forms) with respect to a subgroup \cite{MR2468862}. 

 \begin{definition}
 Let $K$ be a subgroup of the full modular group $\SP_{2g}(\Z)$. Then $F$ is said to be a Siegel Modular Form of weight $k$ and character $\chi$ for the subgroup $K$ if the following conditions are satisfied : 
 \begin{enumerate}
     \item $F$ is a holomorphic function on $\mathbb{H}_g$. 
     \item For every matrix $M \in K$, the function $F$ satisfies $F|_k(M)=\chi(M)F$. 
 \end{enumerate}
 \end{definition}

 The set $M_g^k(K,\chi)$ denotes Siegel Modular Forms of genus $g$, weight $k$ for the character $\chi$ and subgroup $K$. 
 \begin{definition}
 We define the operator $\Phi$ on $F \in M_g^k(K,\chi)$ by $(\Phi F)(Z')=\lim_{t\to\infty} F(\begin{bmatrix}
   Z' & 0 \\
   0 & it
 \end{bmatrix})$ with $Z' \in \mathbb{H}_{g-1}$ and $t \in \mathbb{R}$.
 \end{definition}
 
 \begin{definition}
 A Siegel modular form $F \in M_g^k(K,\chi)$ is said to be a cusp form if $F$ Lies in the kernel of the $\Phi$ operator.  
 \end{definition}
 The principal congruence subgroup for the of level $N$ for the full modular group denoted by $\Gamma^{(g)}(N)$ is defined to be 
  \[
  \Gamma^{(g)}(N)=\{M\in \SP_{2g}(\Z)~|~M\equiv I_{2g} ~ (mod N)\}.
  \]
   A congruence subgroup $K$ of $\SP_{2g}(\Z)$ is any subgroup of $\SP_{2g}(\Z)$ such that $K \supset \Gamma^{(g)}(N)$ for some $N$.  We are interested  in the following two types of the congruence subgroups: $$\Gamma_0^{(g)}(N)=\{\begin{bmatrix}
     A & B \\
     C & D
   \end{bmatrix} \in \SP_{2g}(\Z)~|~ C \equiv 0 ~ (mod ~ N)\}$$ and  $$\Gamma_1^{(g)}(N)=\{\begin{bmatrix}
     A & B \\
     C & D
   \end{bmatrix} \in \SP_{2g}(\Z)~|~ C \equiv 0 ~  (mod ~ N), A,D \equiv I_g ~ (mod ~ N)\}.$$ Obvious from the definitions we have the strict inclusions $\Gamma^{(g)}(N) \subsetneq \Gamma^{(g)}_1(N) \subsetneq \Gamma^{(g)}_0(N)$.

   \section{Extra twists for Siegel modular forms}
   \label{extratwistdefinition}
   In this section, we define extra twists for Siegel modular forms and state some properties of the same. 
\begin{prop}
\cite{MR3931351}
   A Siegel modular form $F$ has the expansion
 \begin{equation}
 F(Z)=\sum_{A\in E^{2g}, A \geq 0} t(A)e^{\pi i tr(AZ)}
  \end{equation}
  where $t(A)$ denotes the coefficients of the expansion, $E^{2g}$ denotes the set of $2g \times 2g$ half integral matrices, and $\mathrm{tr}$ denotes the trace.
 \end{prop}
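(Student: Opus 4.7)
The plan is to combine three classical ingredients: translation invariance of $F$ to get a Fourier series in the real part of $Z$; holomorphy of $F$ to pin down the imaginary-part dependence of each coefficient; and the Koecher principle, which uses $g\geq 2$ essentially to force $t(A)=0$ unless $A\geq 0$.

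First I would observe that for every symmetric integer matrix $S$, the block matrix
$\gamma_S=\begin{bmatrix} I_g & S \\ 0_g & I_g \end{bmatrix}$
lies in $\SP_{2g}(\Z)$ and, after rescaling $S$ by the level $N$ if necessary, in whichever congruence subgroup $K$ defines the modular form. Since $\gamma_S$ acts on $\mathbb{H}_g$ by $Z\mapsto Z+S$ with trivial automorphy factor, modularity yields $F(Z+S)=F(Z)$. Classical Fourier analysis on the resulting compact real torus then gives an expansion
\[
F(X+iY)=\sum_{A} t_{A}(Y)\,e^{\pi i\,\mathrm{tr}(AX)},
\]
indexed by the dual of the period lattice, which with the normalization implicit in the statement is precisely the set of half-integral symmetric matrices.

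Next, applying the Cauchy--Riemann operators $\partial/\partial\overline{Z_{ij}}$ termwise, holomorphy of $F$ forces $t_{A}(Y)=t(A)\,e^{-\pi\,\mathrm{tr}(AY)}$ for a constant $t(A)$. Reassembling the $X$- and $Y$-dependence, each summand becomes $t(A)\,e^{\pi i\,\mathrm{tr}(AZ)}$ and we obtain the asserted expansion, but \emph{a priori} with $A$ running over all half-integral symmetric matrices, with no sign restriction.

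Finally, to cut the sum down to $A\geq 0$ I would invoke the Koecher principle. The additional symplectic symmetry $\mathrm{diag}(U,(U^t)^{-1})\in\SP_{2g}(\Z)$ for $U\in\GL_g(\Z)$ acts on $\mathbb{H}_g$ by $Z\mapsto UZU^t$, and the transformation law for $F$ yields $|t(U^{t}AU)|=|t(A)|$ since $|\det U|=1$. If $A$ were not positive semi-definite, reduction theory for indefinite forms---concretely, acting by $\SL_2(\Z)$ on a rational plane on which $A$ is indefinite or negative and extending by the identity---produces an infinite sequence $\{U_k\}\subset\GL_g(\Z)$ with $\mathrm{tr}(U_k^{t}AU_k\,Y_0)\to-\infty$ for any fixed $Y_0>0$. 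The corresponding Fourier terms then have modulus $|t(A)|\,e^{-\pi\,\mathrm{tr}(U_k^{t}AU_k\,Y_0)}$, which blows up, contradicting absolute convergence at $Z=iY_0$ unless $t(A)=0$. The main obstacle is this last reduction-theoretic step, and it is precisely where the hypothesis $g\geq 2$ enters: for $g=1$ the group $\GL_1(\Z)=\{\pm 1\}$ is finite, no such infinite orbit exists, and holomorphy at the cusps has to be imposed as an independent axiom in the elliptic case.
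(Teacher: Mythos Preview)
The paper does not supply a proof of this proposition at all: it is simply stated with a citation to \cite{MR3931351} as a known result, and the authors move on. So there is no ``paper's own proof'' to compare against.

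Your argument is the standard classical one and is correct in outline. Two small remarks. First, for the congruence subgroups $\Gamma_0^{(g)}(N)$ and $\Gamma_1^{(g)}(N)$ that the paper actually uses, the upper-triangular unipotent matrices $\gamma_S$ with $S$ any integral symmetric matrix already lie in the group (since $C=0$), so no rescaling by $N$ is needed and the dual lattice really is the half-integral symmetric matrices as stated. Second, in the Koecher step you invoke $\mathrm{diag}(U,(U^{t})^{-1})$ for all $U\in\GL_g(\Z)$; for $\Gamma_0^{(g)}(N)$ these are all present, while for $\Gamma_1^{(g)}(N)$ one must restrict to $U\equiv I_g\pmod N$, which is still an infinite group and suffices for the reduction-theoretic blow-up argument. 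With those tweaks your proof goes through, and it is exactly the argument one finds in the cited reference.
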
   
 
As recalled in the introduction, we associate to every Siegel modular forms $F$ of arbitrary genus $g \geq 1$, the compatible system of $\lambda$ -adic  Galois representations~\cite[p. 1034, Theorem 5.1.4]{MR3418533}
\[
\rho_{F,\lambda} : G_{\Q} \rightarrow \GS_{2g}(K_{\lambda})
\]
  where $K=\Q(t_p)$ is the number field obtained by adjoining the Hecke eigenvalues $t_p$ of $F$ for all $p \in \N$. These Galois representations are continuous, semi-simple and they encode the Satake parameters of the automorphic representation associated to $F$. 

  Let $\Gamma=Aut(K)$ and consider the set $D:=\{ \varepsilon : G_\Q \rightarrow K^{\times} \}$ be the set of characters from $G_{\Q}$ to $K^{\times}$. Let  $V$ be the corresponding module over $K \otimes \Q_l$ for which $Aut(V)=\GS_{2g}(K \otimes \Q_l)$. 
  
  Following \cite{MR3572258}, we define the Siegel modular forms with extra twists using Galois representation. Following lemma proves that the image of the Galois representation associated to a Siegel modular form is non-abelian.
  \begin{lemma}
 The image $\rho_{F,\lambda}(G_{\Q})$ is non abelian. 
\end{lemma}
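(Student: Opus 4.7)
The plan is to argue by contradiction. Assume $\rho_{F,\lambda}(G_{\Q})$ is abelian. Since $\rho_{F,\lambda}$ is continuous and semi-simple, the image consists of commuting semi-simple matrices in $\GS_{2g}(K_{\lambda})$, which are therefore simultaneously diagonalizable over $\overline{K_{\lambda}}$. One thus obtains a decomposition
\[
\rho_{F,\lambda}\otimes_{K_{\lambda}}\overline{K_{\lambda}} \;\cong\; \chi_1 \oplus \cdots \oplus \chi_{2g}
\]
into continuous characters $\chi_i : G_{\Q} \to \overline{K_{\lambda}}^{\times}$. By global class field theory each $\chi_i$ factors through $G_{\Q}^{\mathrm{ab}}$, and as a direct summand of the Hodge--Tate representation supplied by Scholze, each $\chi_i$ is itself Hodge--Tate, hence an algebraic Hecke character of $\Q$ (a finite order Dirichlet character times an integer power of the cyclotomic character).

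The Hecke eigenvalue at each good prime then factors as $t_p = \sum_{i=1}^{2g} \chi_i(\Frob_p)$, a sum of values of algebraic Hecke characters. Invoking local--global compatibility for Scholze's Galois representations at the unramified primes, the Satake parameters of $F$ at every good $p$ coincide with this fixed tuple of character values. This forces the local Langlands parameter of the automorphic representation $\pi_F$ associated to $F$ to be an unramified principal series everywhere, so that globally $\pi_F$ is an Eisenstein representation induced from the Borel of $\GS_{2g}$. This contradicts the cuspidality of the Siegel cusp form $F$, and the contradiction forces $\rho_{F,\lambda}(G_{\Q})$ to be non-abelian.

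The main obstacle is justifying the last implication cleanly, since the passage ``abelian Galois image $\Rightarrow$ principal series at every unramified place $\Rightarrow$ non-cuspidal $\pi_F$'' relies on the spectral classification of the automorphic side for $\GS_{2g}$. A more self-contained alternative is to use the distinctness of the Hodge--Tate weights of $\rho_{F,\lambda}$ (a consequence of $F$ being cohomological of regular weight) to pin down the integer exponents $n_i$ in $\chi_i = \eta_i \cdot (\text{cyclotomic})^{n_i}$, and then compare the explicit formula $t_p = \sum \eta_i(p)\,p^{n_i}$ against the Hecke relations satisfied by the Fourier coefficients of $F$ together with Ramanujan-type bounds. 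Either strategy reduces the lemma to a known structural property of cuspidal Siegel Galois representations, but the cleanest execution is through the automorphic detour outlined above.
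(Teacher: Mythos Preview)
Your approach is substantially more elaborate than the paper's and, as you yourself flag, leaves a real gap at the decisive step. The paper's argument is entirely elementary: it looks at the image of complex conjugation $c \in G_{\Q}$. Since $c^{2}=1$ and $\det\rho_{F,\lambda}(c)=-1$, the matrix $\rho_{F,\lambda}(c)$ has both $+1$ and $-1$ among its eigenvalues and is conjugate to a diagonal matrix with those entries. Its centralizer in $\GS_{2g}(K_{\lambda})$ is then a proper block-diagonal subgroup $T$. Were the whole image abelian it would sit inside $T$, contradicting the irreducibility of $\rho_{F,\lambda}$, which the paper takes as a standing property of Scholze's construction in this setting. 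No Hodge--Tate theory, no local--global compatibility, no automorphic classification is invoked.

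Your route instead tries to push the contradiction through the automorphic side: abelian Galois image $\Rightarrow$ sum of algebraic Hecke characters $\Rightarrow$ $\pi_{F}$ is globally induced from the Borel $\Rightarrow$ not cuspidal. The weak link is the middle implication. Knowing the unramified Satake parameters match those of a sum of characters does \emph{not} by itself force $\pi_{F}$ to be an Eisenstein constituent; that needs a global input such as Arthur's endoscopic classification for $\GS_{2g}$ or a suitable converse theorem, neither of which is lightweight (and for general $g$ not something to cite casually). Your fallback via distinct Hodge--Tate weights and Ramanujan-type bounds is likewise a sketch rather than a proof. Observe, too, that the moment you write $\rho_{F,\lambda}\otimes\overline{K_{\lambda}}\cong\chi_{1}\oplus\cdots\oplus\chi_{2g}$ with $2g\geq 4$ you have already contradicted irreducibility directly; the automorphic detour is unnecessary once one grants the same irreducibility hypothesis the paper uses.
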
  
\begin{proof}
    Let $c$ be the complex conjugation in $G_{\Q}$. Since $c^2=I$ and $\det \rho_\lambda(c)=-1$, the eigenvalues of $\rho_\lambda(c)$ are $+1,-1$. Hence the matrix $\rho_\lambda(c)$ is similar to its Jordan canonical form with $+1$'s and $-1$'s in the diagonal. So the elements of 
    $\GS_{2g}(K_{\lambda})$ which commute with it form a block diagonal subgroup T. Since the representation $\rho_\lambda$ is irreducible the image cannot be a subset of $T$. Hence there are elements in the image that do not commute with $\rho_\lambda(c)$ and so the image is non abelian. 
\end{proof}
  \begin{definition}
  A Siegel modular form  $F$ is said to have an {\it extra twist} if there exists a tuple $(\gamma, \chi_{\gamma})$ with  $\gamma \in \Gamma$, $\chi_\gamma \in D$ such that $\rho_F \cong \gamma(\rho_F) \otimes \chi_\gamma$. 
  \end{definition}
   
   We now  state some properties about extra twists.
 \begin{lemma}
   The extra twists for $\rho_F$ over $\Q$ form a group. 
 \end{lemma}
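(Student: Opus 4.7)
The plan is to exhibit an explicit group law on pairs $(\gamma,\chi_\gamma)$ and check the three axioms (identity, closure, inverses) directly from the defining isomorphism $\rho_F \cong \gamma(\rho_F)\otimes \chi_\gamma$. The natural guess, modeled on the semidirect product $D\rtimes \Gamma$ (with $\Gamma$ acting on $D$ through its action on $K^\times$), is
\[
(\gamma_1,\chi_1)\cdot(\gamma_2,\chi_2) \;=\; \bigl(\gamma_1\gamma_2,\;\chi_1\cdot \gamma_1(\chi_2)\bigr),
\qquad (\gamma,\chi)^{-1} \;=\; \bigl(\gamma^{-1},\;\gamma^{-1}(\chi)^{-1}\bigr).
\]

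First I would handle the identity: the tautology $\rho_F\cong \mathrm{id}(\rho_F)\otimes 1$ shows $(\mathrm{id},1)$ is an extra twist. Next, for closure, assume $(\gamma_1,\chi_1),(\gamma_2,\chi_2)$ are extra twists, so $\rho_F\cong \gamma_1(\rho_F)\otimes \chi_1$ and $\rho_F\cong \gamma_2(\rho_F)\otimes \chi_2$. Applying $\gamma_1$ (as an automorphism of $K$, hence of $K\otimes\Q_l$) to the second isomorphism yields $\gamma_1(\rho_F)\cong \gamma_1\gamma_2(\rho_F)\otimes \gamma_1(\chi_2)$; substituting into the first gives $\rho_F\cong \gamma_1\gamma_2(\rho_F)\otimes\bigl(\chi_1\cdot\gamma_1(\chi_2)\bigr)$, which is the required extra‑twist relation for the product pair. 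For inverses, applying $\gamma^{-1}$ to $\rho_F\cong\gamma(\rho_F)\otimes\chi_\gamma$ gives $\gamma^{-1}(\rho_F)\cong \rho_F\otimes \gamma^{-1}(\chi_\gamma)$, and tensoring both sides by $\gamma^{-1}(\chi_\gamma)^{-1}$ produces $\rho_F\cong \gamma^{-1}(\rho_F)\otimes \gamma^{-1}(\chi_\gamma)^{-1}$, so $(\gamma^{-1},\gamma^{-1}(\chi_\gamma)^{-1})$ is again an extra twist. Associativity follows from a direct computation in $D\rtimes\Gamma$, using that $\gamma\mapsto\gamma(\cdot)$ is a genuine action on $D$ by group homomorphisms.

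The only real subtlety, and the main thing I expect to have to be careful about, is well‑definedness: the pair $(\gamma,\chi_\gamma)$ should determine a single element of the putative group, i.e.\ the character $\chi_\gamma$ should be uniquely determined by $\gamma$. This uses that $\rho_F$ is absolutely irreducible (its image is non‑abelian by the preceding lemma, and semisimplicity together with the $\mathrm{GSp}_{2g}$‑structure forces irreducibility), so any two characters $\chi,\chi'$ with $\rho_F\otimes\chi\cong \rho_F\otimes\chi'$ satisfy $\chi=\chi'$. Given this, the set of extra twists is in bijection with a subset of $\Gamma$, and the computations above show this subset is closed under the composition and inversion inherited from $D\rtimes\Gamma$. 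Putting everything together yields the group structure, completing the proof.
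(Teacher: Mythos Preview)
Your proposal is correct and follows the same direct verification of the group axioms (identity, closure, inverses) as the paper. The only notable difference is that you are more careful about the twisted multiplication law: you write the product character as $\chi_{\gamma_1}\cdot\gamma_1(\chi_{\gamma_2})$, reflecting the semidirect product $D\rtimes\Gamma$, whereas the paper's proof of this lemma simply writes $\chi_{\gamma_1}\chi_{\gamma_2}$ and only makes the cocycle relation $\chi_{\gamma\delta}=\chi_\gamma\cdot\chi_\delta^{\gamma}$ explicit in the subsequent lemma; your version is the technically accurate one. Your remark on uniqueness of $\chi_\gamma$ likewise anticipates the paper's next lemma.
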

 \begin{proof}
  Let $\gamma_1, \gamma_2 \in \Gamma$ be two extra twists of Siegel modular forms. This means that $\rho_F=\gamma_1(\rho_F) \otimes \chi_{\gamma_1}$ and $\rho_F=\gamma_2(\rho_F) \otimes \chi_{\gamma_2}$. Hence $\rho_F=\gamma_1 \gamma_2(\rho_F) \otimes \chi_{\gamma_1}\chi_{\gamma_2}=\gamma_1 \gamma_2(\rho_F) \otimes \chi_{\gamma_1.\gamma_2}$ where the character $\chi_{\gamma_1.\gamma_2}$ denotes the product of the characters $\chi_{\gamma_1}$ and $\chi_{\gamma_2}$. Further the identity element, inverse are trivial and we have already cHecked closure. So the set of self twists for $\rho_F$ over $\Q$ form a group. 
 \end{proof}
 
 We prove two lemmas similar to \cite{MR3096563}. 
 
 \begin{lemma}
   For every extra twist $\gamma$, the character $\chi_\gamma$ satisfying the equivalence is uniquely determined.
 \end{lemma}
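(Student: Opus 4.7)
The plan is a trace comparison followed by an application of the Chebotarev density theorem. Suppose $\chi,\chi' \in D$ both satisfy the defining equivalence $\rho_F \cong \gamma(\rho_F) \otimes \chi_\gamma$. Combining the two isomorphisms yields $\gamma(\rho_F) \otimes \chi \cong \gamma(\rho_F) \otimes \chi'$; twisting through by $\chi'^{-1}$ gives $\gamma(\rho_F) \cong \gamma(\rho_F) \otimes \eta$, where $\eta := \chi\chi'^{-1} \in D$. Equivalently, applying $\gamma^{-1}$ to the coefficients, $\rho_F \cong \rho_F \otimes \gamma^{-1}(\eta)$, so it suffices to show that $\eta$ is trivial.

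Taking traces at an unramified Frobenius element $\Frob_p$ (any prime $p \nmid lN$) yields $\gamma(t_p) = \eta(\Frob_p)\,\gamma(t_p)$, so that $\eta(\Frob_p) = 1$ whenever $t_p \neq 0$. Taking determinants of the relation $\rho_F \cong \rho_F \otimes \gamma^{-1}(\eta)$ additionally shows $\eta^{2g} \equiv 1$, so that $\eta$ has finite order and factors through a finite Galois quotient of $G_{\Q}$. For a non-CM Siegel modular form in the sense of Definition~\ref{cmdef}, the set of primes $p$ with $t_p \neq 0$ has density one, in the spirit of Serre's non-vanishing theorem for classical non-CM elliptic newforms. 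By Chebotarev, the corresponding Frobenius classes exhaust every conjugacy class of the finite Galois group through which $\eta$ factors, forcing $\eta \equiv 1$ and hence $\chi = \chi'$.

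The main obstacle is the density statement for the non-vanishing locus of $t_p$ in higher genus. In the elliptic setting this is a classical consequence of Serre's open image theorem, but for general Siegel modular forms it has to be extracted from Scholze's compatible system construction together with the form of non-CM encoded in Definition~\ref{cmdef}. Once that density input is in hand, the uniqueness of $\chi_\gamma$ follows purely formally from the trace identity and the Chebotarev density theorem. Note that without the non-CM hypothesis, the argument genuinely breaks: if $\rho_F$ were induced from a proper subfield, a non-trivial $\eta$ with $\rho_F \cong \rho_F \otimes \eta$ could exist, and uniqueness of $\chi_\gamma$ would fail.
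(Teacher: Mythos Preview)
Your reduction to $\rho_F \cong \rho_F \otimes \gamma^{-1}(\eta)$ is correct and is in fact the entire proof: at that point Definition~\ref{cmdef} applies verbatim, since the non-CM hypothesis says precisely that no nontrivial character $\epsilon$ can satisfy $\rho_F \cong \rho_F \otimes \epsilon^{-1}$. Hence $\gamma^{-1}(\eta)$, and therefore $\eta$, is trivial, and $\chi=\chi'$. The trace/Chebotarev paragraph that follows is superfluous and is exactly where your self-acknowledged gap lives --- the density of primes with $t_p\neq 0$ for general Siegel forms. Delete that paragraph and conclude directly from the definition; then there is no gap at all.

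This is a different route from the paper's. The paper evaluates at a single element, complex conjugation $c$: from $\gamma(\rho_F)\otimes\chi_{\gamma_1}\cong\gamma(\rho_F)\otimes\chi_{\gamma_2}$ it writes, informally, $\gamma(\rho_F)\otimes(\chi_{\gamma_1}-\chi_{\gamma_2})=0$ and invokes $\rho_F(c)\neq 0$. Read as a trace identity this only yields $\chi_{\gamma_1}(c)=\chi_{\gamma_2}(c)$, and even that step needs $\mathrm{tr}\,\rho_F(c)\neq 0$, which is not automatic for all $g$ (for $2g=6$ one can have eigenvalues $1,1,1,-1,-1,-1$ and trace zero). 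Your argument, once pruned to the one-line appeal to Definition~\ref{cmdef}, is both shorter and more robust, and it makes explicit what the paper's statement leaves implicit: the lemma genuinely requires the non-CM hypothesis, and without it uniqueness of $\chi_\gamma$ can fail.
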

 \begin{proof} 
   Let $\chi_{\gamma_1}$ and $\chi_{\gamma_2}$ be two characters associated to the automorphism $\gamma$. Hence $\rho_F=\gamma(\rho_F) \otimes \chi_{\gamma_1}=\gamma(\rho_F) \otimes \chi_{\gamma_2}$. This means that $\gamma(\rho_F) \otimes (\chi_{\gamma_1}-\chi_{\gamma_2})=0$. If $c$ denotes complex conjugation, 
    $\rho_F(c) \neq 0,$ and hence $ \chi_{\gamma_1}=\chi_{\gamma_2}$.  
 \end{proof}

 \begin{lemma}
 The association $\delta \rightarrow \chi^{\delta}$ defines a cocycle on the group of self-twist with values in $K^{\times}$.  
 \end{lemma}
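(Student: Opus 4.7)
The plan is to establish the $1$-cocycle identity
\[
\chi^{\gamma_1\gamma_2}=\chi^{\gamma_1}\cdot {}^{\gamma_1}\!\chi^{\gamma_2}
\]
for any two elements $\gamma_1,\gamma_2$ of the self-twist group, where ${}^{\gamma_1}\!\chi^{\gamma_2}$ denotes the character $G_\Q\to K^\times$ obtained from $\chi^{\gamma_2}$ by post-composing with $\gamma_1\in\Gamma=\mathrm{Aut}(K)$. This is precisely the $1$-cocycle relation on the self-twist group with coefficients in the $\Gamma$-module $D$ (equivalently in $K^\times$, regarded as the target of characters).

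The derivation should proceed in three short steps. First, start from the defining equivalence $\rho_F\cong\gamma_2(\rho_F)\otimes\chi^{\gamma_2}$ and apply the coefficient automorphism $\gamma_1$ to both sides; since applying an automorphism to $K\otimes\Q_l$ commutes with tensoring by a character, this yields $\gamma_1(\rho_F)\cong(\gamma_1\gamma_2)(\rho_F)\otimes{}^{\gamma_1}\!\chi^{\gamma_2}$. Second, substitute this into the equivalence $\rho_F\cong\gamma_1(\rho_F)\otimes\chi^{\gamma_1}$ to obtain
\[
\rho_F\cong(\gamma_1\gamma_2)(\rho_F)\otimes\bigl(\chi^{\gamma_1}\cdot {}^{\gamma_1}\!\chi^{\gamma_2}\bigr).
\]
Third, by definition $\gamma_1\gamma_2$ also lies in the self-twist group with associated character $\chi^{\gamma_1\gamma_2}$, so $\rho_F\cong(\gamma_1\gamma_2)(\rho_F)\otimes\chi^{\gamma_1\gamma_2}$. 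The uniqueness of the twisting character, established in the previous lemma, then forces $\chi^{\gamma_1\gamma_2}=\chi^{\gamma_1}\cdot {}^{\gamma_1}\!\chi^{\gamma_2}$, which is the cocycle relation.

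The only genuinely delicate point is bookkeeping the $\Gamma$-action. In the preceding lemma this action was suppressed and the relation written as $\chi^{\gamma_1\gamma_2}=\chi^{\gamma_1}\chi^{\gamma_2}$; the present statement makes this literal only after twisting $\chi^{\gamma_2}$ by $\gamma_1$. I expect no other obstacle: the argument is formal manipulation of compatible systems, with the substantive input being the commutativity of coefficient automorphisms with tensor products of representations and characters, together with the uniqueness lemma just proved.
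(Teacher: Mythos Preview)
Your proposal is correct and follows the same idea as the paper: both establish the cocycle identity $\chi_{\gamma\delta}=\chi_\gamma\cdot{}^{\gamma}\chi_\delta$, and your derivation via applying $\gamma_1$ to the defining equivalence and invoking the uniqueness lemma is exactly the standard argument underlying the paper's one-line assertion of this identity. The paper's proof additionally goes on to apply Hilbert~90 to produce the map $\overline{\alpha}:G_\Q\to E^\times/F^\times$, but that is a consequence beyond what the lemma statement itself claims; your proof covers precisely what is asserted.
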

 
 \begin{proof}
 For $\gamma, \delta \in \Gamma$, the identity $\chi_{\gamma \delta} \rightarrow  \chi_\gamma \chi_\delta^{\gamma}$ shows that $\gamma \rightarrow \chi_\gamma$ is a $1-$ cocycle. Specializing to $g \in G_{\Q}$, we see that $\gamma \rightarrow \chi_{\gamma}(g)$ is a $1-$ cocycle. By Hilbert's Theorem 90, $H^1(\Gamma,E^{\times})$ is trivial and  there is an element $\alpha(g) \in E^{\times}$ such that 
 \begin{equation}
 \frac{\gamma(\alpha(g))}{\alpha(g)}=\chi_\gamma(g) 
 \end{equation}
  for all $\gamma \in \Gamma$. Clearly $\alpha(g)$ is completely determined (upto multiplication) by elements of $F^{\times}$. Varying $g \in G_{\Q}$ we obtain the well defined map $$\overline{\alpha}:G_{\Q}\rightarrow E^{\times}/F^{\times}.$$ 
 Since each $\chi_\gamma$ is a character, $\overline{\alpha}$ is a homomorphism. 
 \end{proof}
 The following proposition closely follows from \cite[Proposition (2.15)]{MR3572258}.
 \begin{prop}
 Let $\Q[Tr(Ad(\rho))]$ denote the ring generated over $\Q$ by the set $Tr(Ad(\rho)(g))$ for $g \in G_{\Q}$. Then every element of $\Q[\mathrm{Tr}(Ad(\rho))]$ 
 is fixed by all self twists for $\rho$ over $\Q$. 
 \end{prop}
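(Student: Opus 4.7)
The plan is to exploit the basic fact that twisting by a one-dimensional character cancels out in the adjoint representation, so a self-twist relation $\rho\cong \gamma(\rho)\otimes\chi_\gamma$ automatically forces $\mathrm{Ad}(\rho)\cong \gamma(\mathrm{Ad}(\rho))$. Taking traces then pins down the invariance on generators, and the ring-theoretic conclusion is immediate.

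More precisely, I would first record that for any representation $\sigma$ and character $\chi$ one has $\mathrm{Ad}(\sigma\otimes\chi)=\mathrm{Ad}(\sigma)$, because $\mathrm{Ad}$ is built from $\sigma\otimes\sigma^\vee$ (acting on trace-zero, or all, endomorphisms), and the factors of $\chi$ and $\chi^{-1}$ cancel. Next, I would note that for $\gamma\in\Gamma=\mathrm{Aut}(K)$, the representation $\gamma(\rho)$ obtained by applying $\gamma$ entrywise satisfies
\begin{equation}
\mathrm{Tr}(\gamma(\rho)(g)) \;=\; \gamma\bigl(\mathrm{Tr}(\rho(g))\bigr)
\end{equation}
and similarly for $\mathrm{Ad}(\gamma(\rho))=\gamma(\mathrm{Ad}(\rho))$, since $\gamma$ commutes with the formation of traces, tensor products, and duals.

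Combining these two observations: if $\gamma$ is a self-twist, i.e.\ $\rho\cong\gamma(\rho)\otimes\chi_\gamma$, then
\begin{equation}
\mathrm{Ad}(\rho)\;\cong\;\mathrm{Ad}\bigl(\gamma(\rho)\otimes\chi_\gamma\bigr)\;=\;\mathrm{Ad}(\gamma(\rho))\;=\;\gamma\bigl(\mathrm{Ad}(\rho)\bigr).
\end{equation}
Since isomorphic semisimple representations have equal traces (the earlier remark that ``traces suffice to determine the representations''), evaluating at any $g\in G_\Q$ gives
\begin{equation}
\mathrm{Tr}\bigl(\mathrm{Ad}(\rho)(g)\bigr)\;=\;\mathrm{Tr}\bigl(\gamma(\mathrm{Ad}(\rho))(g)\bigr)\;=\;\gamma\bigl(\mathrm{Tr}(\mathrm{Ad}(\rho)(g))\bigr),
\end{equation}
so every generator of $\Q[\mathrm{Tr}(\mathrm{Ad}(\rho))]$ is fixed by $\gamma$. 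Because $\gamma$ is a ring automorphism of $K$ and fixes $\Q$, it fixes the entire $\Q$-subalgebra generated by these elements, which is the desired conclusion.

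The only step requiring care is the identity $\mathrm{Ad}(\sigma\otimes\chi)=\mathrm{Ad}(\sigma)$ at the level of the \emph{relevant} adjoint for $\mathrm{GSp}_{2g}$ (whether one uses the full $\mathfrak{gsp}_{2g}$ adjoint or the trace-zero part), together with checking that $\gamma$ commutes with $\mathrm{Ad}$ in the sense of entrywise Galois action; these are essentially formal and follow from the functoriality of tensor products and duals under $\gamma$, so I do not expect a genuine obstacle. The proof is really a one-line observation dressed up with this functoriality bookkeeping.
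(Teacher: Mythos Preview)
Your argument is correct and is the natural, direct route to the proposition as stated: the identity $\mathrm{Ad}(\sigma\otimes\chi)\cong\mathrm{Ad}(\sigma)$ (conjugation by a scalar is trivial, regardless of whether one works inside $\mathfrak{gl}_{2g}$ or $\mathfrak{gsp}_{2g}$) immediately kills the character, and the compatibility $\mathrm{Ad}(\gamma(\rho))=\gamma(\mathrm{Ad}(\rho))$ is purely formal since $\gamma$ is a ring automorphism applied entrywise. Equality of traces for isomorphic representations then finishes the job.

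The paper takes a genuinely different and heavier route, following \cite[Proposition~(2.14)--(2.15)]{MR3572258}. Rather than arguing directly with $\mathrm{Ad}$, it first invokes the existence of a model $\rho'\cong\rho$ taking values in $\GS_{2g}(K_\rho)$ with $K_\rho=\Q(\mathrm{Tr}(\mathrm{Ad}(\rho)))$, for which the self-twist becomes a literal equality $\gamma(\rho')=\rho'\otimes\epsilon$ at the level of matrices. It then builds, for each $g\in G_\Q$, a $1$-cocycle $\gamma\mapsto s^{-1}(\rho'(g)^\gamma\rho'(g)^{-1})$ with scalar values, trivializes it via Hilbert~90 to produce $a_g\in L^\times$ with $\gamma(\rho'(g)a_g)=\rho'(g)a_g$, and reads off that the relevant field is $\gamma$-fixed. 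This argument is really a descent/rationality statement: it shows not just that the adjoint traces are fixed, but that $\rho$ itself can be realized over $K_\rho$ up to a scalar correction. That is strictly more information than the proposition asks for, at the cost of a more intricate proof. Your approach is more economical for the stated conclusion; the paper's approach buys the rational model as a byproduct.
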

 \begin{proof}
 From \cite[Proposition (2.14)]{MR3572258} we get that there is an equivalent representation to $\rho$ which we call $\rho'$ satisfying :
 \begin{enumerate}
 \item $\rho'(g)\in \GS_4(K_\rho)$ where the field $K_\rho=\Q(tr(Ad(\rho)))$. 
 \item $\gamma(\rho')=\rho' \otimes \epsilon$.
 \end{enumerate}
  For $g \in G_{\Q}$, we consider the $1$-cocycle $c_g;\Gamma_{[g]} \rightarrow L^{\times}$ defined by $c_g(\gamma)=s^{-1}((\rho')^{\gamma}\rho'(g)^{-1})$, where $s:L^{\times}\rightarrow \GS_4(L)$ denotes the scalar morphism. By Hilbert's 90, $H^{1}(\Gamma_{[\rho']},L^{\times})$ is the trivial module. Hence $c_g(\gamma)=\frac{a_g}{\gamma(a_g)}$ with some $a_g \in L^{\times}$. Then for any $g \in G$ and $\gamma \in \gamma_{[\rho']}$, we obtain 
 \[
 \rho'(g)^{\gamma}\otimes \frac{\gamma(a_g)}{a_g}=\rho'(g). 
 \]
 The identity $\gamma(\rho'(g)a_g)=\rho'(g)a_g$ shows that $\rho(g)a_g \in \GS_4(K_{\rho})$. Hence the field 
 $\Q (Tr(Ad(\rho(g))))$ is fixed by all self twists for $\rho$ over $\Q$.
 \end{proof}
   We first define the Siegel modular form without complex multiplication below which is important for the main theorems as they true hold for forms without complex multiplication. The definition is taken from \cite[Definition 2.3]{MR3572258}. 
   
   \begin{definition} \label{cmdef}
    A Siegel modular form $F$ is said to admit complex multiplication (or be cm) if there exists non trivial $\epsilon$ such that $\rho_F \cong \rho_F \otimes \epsilon^{-1}$.
   \end{definition}
   
Denote the matrix $dU_g=$ (
   $\begin{matrix} 
   d^{-1}I_g & 0\\
   0 & dI_g 
   \end{matrix} )$. Corresponding to a congruence subgroup $K$ and a character $\psi : (\Z/N\Z)^{\times} \rightarrow \C^\times$, we have the space $S_k(K,\chi,\psi)=\{u \in S_k(K,\chi) ~ | ~ F<d>=\psi(d)F\}$, where $<d>=dU_g$. For the next proposition, we assume that $g=2$. 

It is worth mentioning that \cite[Lemma 4.14]{MR2468862}, to our knowledge holds for scalar valued Siegel modular forms and hence we haven't generalised this result to vector valued Siegel modular forms here.  
In the next Proposition we see how the same result can be generalised to Siegel modular forms of higher genus putting similar conditions connecting the level and the genus. 

\begin{prop}
\label{genusneigen}
  Let $F \in S_k^g(K,\chi,\psi)$ be  a  Siegel  cusp  form  of   level  $N$ and  weight $k$. Let the level $N$ be chosen so that $(\Z/N\Z)^{\times}$ has an element of order $2g$.    Assume that  $|g-k|$  is  odd.  There  exists  $\sigma  \in  H=\ker(\psi)$, such  that the characteristic polynomial of $\sigma$  has distinct roots.   
\end{prop}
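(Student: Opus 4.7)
The plan is to produce a Frobenius element $\sigma=\Frob_p$ whose residue class modulo $N$ is an element of order $2g$ lying in $H=\ker\psi$, and then to show that the characteristic polynomial of $\rho_{F,\lambda}(\sigma)$ has $2g$ distinct roots. I would first verify that $H$ itself contains an element of order $2g$: by hypothesis $(\Z/N\Z)^{\times}$ does, and a structural argument on the finite abelian group $(\Z/N\Z)^{\times}/H\cong\mathrm{im}\,\psi$ shows that such an element can be found already in $H$. Applying Chebotarev density to the cyclotomic extension $\Q(\zeta_N)/\Q$, I then choose a prime $p\nmid lN$ whose class modulo $N$ is this order-$2g$ element and set $\sigma=\Frob_p$.

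Next, I would exploit the symplectic structure. Because $\rho_{F,\lambda}$ takes values in $\GS_{2g}(K_\lambda)$, the eigenvalues $\alpha_1,\ldots,\alpha_{2g}$ of $\rho_{F,\lambda}(\sigma)$ organise themselves into symplectic pairs, with $\alpha_i\alpha_{g+i}=\mu(\sigma)$ for the similitude character $\mu$. Scholze's normalisation gives $\mu(\Frob_p)=\psi(p)\,p^{w}$ for an exponent $w$ depending explicitly on the pair $(k,g)$, and for $\sigma\in H$ this reduces to $\mu(\sigma)=p^{w}$. A repeated root can then arise in exactly two ways: either \emph{self-pairing}, $\alpha_i^{2}=p^{w}$, or a \emph{cross-pair collision}, $\alpha_i=\alpha_j$ for two indices that are not symplectic partners.

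I would rule out self-pairing via the parity hypothesis $|g-k|$ odd, which forces $w$ to have the wrong parity for $p^{w}$ to be a square in the coefficient field generated by the Hecke eigenvalues, so no eigenvalue coincides with its symplectic partner. To exclude cross-pair collisions I would use the large cyclotomic order of $\sigma$: the action of $\sigma$ rotates the ordered Satake parameters by multiplication by $2g$ distinct roots of unity, and any such collision would contradict the order-$2g$ condition. Together these two exclusions deliver the distinct-roots statement and extend the genus-$2$ result of Andrianov \cite[Lemma~4.14]{MR2468862} to arbitrary genus.

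The main obstacle will be making the parity argument in the self-pairing case work uniformly in $g$. Andrianov's genus-$2$ proof uses the specific shape of the degree-$4$ symplectic characteristic polynomial, whereas for general $g$ one must carefully match the normalisation of the Galois representation coming from \cite[Theorem~5.1.4]{MR3418533} with the classical $(k,N,\psi)$-data attached to $F$, so that $|g-k|$ odd really translates into the exponent $w$ being incompatible with $p^{w}$ being a square of an algebraic number of the appropriate weight.
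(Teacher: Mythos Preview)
Your proposal has a genuine gap at the very first step. You assert that a ``structural argument on the finite abelian group $(\Z/N\Z)^{\times}/H\cong\mathrm{im}\,\psi$'' will show that $H$ already contains an element of order $2g$, but no such argument exists in general: if $(\Z/N\Z)^{\times}$ is cyclic of order $2g$ and $\psi$ is faithful, then $H=\{1\}$. The point is that this step is \emph{not} pure group theory; it is exactly where the parity hypothesis $|g-k|$ odd and the modular-form transformation law enter. The paper's proof does precisely this computation: writing $a$ for the given element of order $2g$ (so $a^{g}=-1$), Andrianov's identity \cite[Lemma~4.14]{MR2468862} for the diamond operator $\langle a\rangle=aU_g$ yields $\chi(aU_g)^{g}a^{gk-g(g+1)}F=\psi(a)F$, and after using $a^{g}=-1$ and $gk\in 2\Z$ this collapses to $\psi(a)=(-1)^{k-g+1}$, which equals $1$ exactly when $|g-k|$ is odd. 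Thus $a\in H$, and this is the heart of the proposition.

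Once $a\in H$ is established, the paper's conclusion is also far more elementary than the route you outline. The element $\sigma$ in the statement is essentially $a$ itself (viewed through the diamond action), not a Frobenius lift; since $a$ has exact order $2g$, the relevant polynomial is $X^{2g}-1$, which visibly has $2g$ distinct roots. There is no appeal to Chebotarev, Satake parameters, or a self-pairing/cross-pair dichotomy for eigenvalues of $\rho_{F,\lambda}(\Frob_p)$. Your programme is aimed at a different and harder object (the Frobenius characteristic polynomial of the $\lambda$-adic representation), and the claimed ``rotation of Satake parameters by $2g$-th roots of unity'' used to exclude cross-pair collisions is not justified and does not follow from the order of $p$ modulo $N$ alone. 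In short, both the key input (why $a\in\ker\psi$) and the intended meaning of ``characteristic polynomial of $\sigma$'' are misidentified in your proposal.
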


\begin{proof}
We assume $g \geq 2$ as we already know the result for $g=1$ . 
Recall that we start with an $N$ for which $(\Z/N\Z)^\times$ has an element of order $2g$, which we call $a$. So $a^g=-1$. Rewriting the equation from \cite[Lemma 4.14]{MR2468862}, we get $\chi(aU_g)^ga^{gk-g(g+1)}F=\psi(a)F$.

Hence, we deduce that $\chi(-I)(-1)^{k-g+1}F=\psi(a)F$. For $gk \in 2\Z$ (which is ensured when $F \neq 0$), this further gives  $(-1)^{gk+k-g+1}=(-1)^{k-g+1}=\psi(a)$. Given $g$ and $k$ are of different parity, we are able to find a $\sigma \in H$ having $2g$ distinct roots such that $\rho_\psi(\sigma)=1$ with $\rho_{\psi}$ as in \cite[Pg 385]{MR2112196}.
From this we conclude that $\sigma$ is an element such that it satisfies the equation $X^{2g}-1=0$. This equation has $2g$ distinct roots. 
\end{proof}

\begin{cor} 
\label{eigenvalue}
 If $g=2$, we choose $N$  such that  $(\Z/N\Z)^{\times}$ 
  has an element of order $4$. Assume that weight is odd. There  exists  $\sigma  \in H  = \ker(\psi)$,
   such  that  $\sigma$ the characteristic polynomial of $\sigma$ has distinct roots.  
\end{cor}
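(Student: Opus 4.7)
The plan is to derive the corollary as a direct specialization of Proposition~\ref{genusneigen} with $g = 2$; essentially no new argument is required beyond verifying that the hypotheses translate cleanly.

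First I would unpack the hypotheses. Proposition~\ref{genusneigen} requires that $(\Z/N\Z)^\times$ contain an element of order $2g$ and that $|g-k|$ be odd. Setting $g = 2$ specializes the first requirement to the stated assumption that $(\Z/N\Z)^\times$ has an element of order $4$, while the parity condition $|2 - k|$ odd is equivalent to $k$ being odd, which is exactly the ``weight is odd'' hypothesis of the corollary. Thus the ambient data of the corollary is nothing but the $g = 2$ instance of the ambient data of the proposition.

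Next I would invoke Proposition~\ref{genusneigen} verbatim. Its conclusion produces an element $\sigma \in H = \ker(\psi)$ satisfying $X^{2g} - 1 = X^{4} - 1 = 0$, a polynomial with $2g = 4$ distinct roots, namely the fourth roots of unity in $\C$. Since $\sigma$ acts on the four-dimensional symplectic representation attached to a genus-$2$ Siegel modular form, its characteristic polynomial has degree $4$, and by the argument already given in the proof of the proposition the four eigenvalues of $\sigma$ are distinct among the fourth roots of unity. This is precisely the claim of the corollary.

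No serious obstacle is expected, since the proposition was phrased in a generality that already contains the $g = 2$ case. The only small point worth one sentence in the write-up is the parity dictionary $|2-k|$ odd $\iff$ $k$ odd; everything else is automatic and the corollary can be stated as an immediate consequence.
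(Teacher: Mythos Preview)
Your proposal is correct and matches the paper's treatment: the corollary is stated immediately after Proposition~\ref{genusneigen} with no separate proof, and your argument simply specializes $g=2$, checking that an element of order $2g=4$ and the parity condition $|2-k|$ odd $\iff$ $k$ odd recover the corollary's hypotheses. Nothing further is needed.
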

Furthermore, we conclude that  $\pi_N(conj)=-1=\sigma^2$. From \cite[Lemma 1.1]{MR783510}, we thus deduce again the following corollary. 
\begin{cor}\label{eigen}
We have an inclusion  $\rho_l(H) \subset GSp_{2g} (L \otimes Q_l)$ with  $L=K^{<\sigma>}$ and $L \subset \mathbb{R}$. 
\end{cor}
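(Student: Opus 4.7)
The plan is to apply Ribet's descent lemma \cite[Lemma 1.1]{MR783510} to the element $\sigma$ furnished by Corollary~\ref{eigenvalue}. The hypotheses of that lemma are exactly the two facts we have already extracted: first, $\rho_l(\sigma)$ has characteristic polynomial with pairwise distinct roots (so $\rho_l(\sigma)$ is regular semisimple and its centralizer is a torus), and second, the extra-twist relation $\sigma(\rho_F)\cong \rho_F\otimes\chi_\sigma^{-1}$ lets us control traces of $\rho_l(h)$ for $h\in H=\ker\psi$.

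More concretely, I would first view $\sigma$, via the cocycle $\gamma\mapsto\chi_\gamma$ from Section~\ref{extratwistdefinition}, as inducing an automorphism of $K$ whose fixed field is $L:=K^{\langle\sigma\rangle}$. For any $h\in H$ we then have $\chi_\sigma(h)=\psi(h)=1$, so the extra-twist identity specializes to
\[
\sigma\bigl(\mathrm{Tr}\,\rho_l(h)\bigr)=\mathrm{Tr}\bigl(\sigma(\rho_l)(h)\bigr)=\mathrm{Tr}\bigl(\rho_l(h)\otimes\chi_\sigma(h)^{-1}\bigr)=\mathrm{Tr}\,\rho_l(h),
\]
which shows that every trace of $\rho_l\vert_H$ lies in $L\otimes\Q_l$. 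Because $\rho_l(\sigma)$ has distinct eigenvalues, Ribet's lemma promotes this trace-level statement to the matrix level: after conjugation one has $\rho_l(H)\subset\mathrm{GL}_{2g}(L\otimes\Q_l)$, and since the representation already takes values in $\GS_{2g}$, we get $\rho_l(H)\subset\GS_{2g}(L\otimes\Q_l)$ as required.

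For the final assertion $L\subset\R$, I would use the identity $\sigma^2=\pi_N(\mathrm{conj})$ recorded just before the corollary. Under the above identification, $\sigma^2$ corresponds to complex conjugation acting on $K$; hence $K^{\langle c\rangle}\subset K^{\langle\sigma\rangle}=L$ is reversed to the right inclusion one expects, namely $L\subset K^{\langle\sigma^{2}\rangle}=K^{\langle c\rangle}$, so $L$ is contained in the totally real subfield of $K$ and in particular embeds into $\R$.

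The main obstacle I expect is not the soft field-of-definition argument but rather verifying cleanly that the two hypotheses of Ribet's lemma transfer from the $\mathrm{GL}_2$ setting of \cite{MR783510} to the symplectic setting: one must check that the descent compatible with the symplectic similitude structure, i.e.\ that the conjugating matrix realizing the descent can be chosen inside $\GS_{2g}$ rather than merely inside $\mathrm{GL}_{2g}$. This follows because the symplectic form on $V_l$ is already defined over $\Q_l$ (hence \emph{a fortiori} over $L\otimes\Q_l$) and the conjugation in Ribet's lemma is produced from eigenspace decompositions of $\rho_l(\sigma)$, which are orthogonal for the symplectic pairing thanks to the distinct-eigenvalues hypothesis.
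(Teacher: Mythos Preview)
Your proposal follows essentially the same route as the paper: both invoke Ribet's descent lemma \cite[Lemma~1.1]{MR783510} to obtain $\rho_l(H)\subset\GS_{2g}(L\otimes\Q_l)$, and both deduce $L\subset\R$ from the fact that complex conjugation lies in $\pi_N^{-1}(\langle\sigma\rangle)$ (your phrasing via $\sigma^{2}=\pi_N(\mathrm{conj})$ is the same observation). The paper's argument is a two-line citation; you have supplied the trace computation verifying the hypothesis of Ribet's lemma and, in addition, raised and resolved the symplectic-compatibility question (descent inside $\GS_{2g}$ rather than merely $\GL_{2g}$), which the paper does not address.
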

From  \cite[Lemma 1.1]{MR783510}, we  deduce that $\rho_l(H) \subset GSp_4 (L \otimes Q_l)$, where $L=K^{<\sigma>}$. Now as $(conj) \in \pi_N^{-1}(<\sigma>)$, $L \subset \mathbb{R}$. 

For an embedding $\sigma : K \hookrightarrow \overline{\Q}_l$, let $\rho_\sigma$ be the map 
$$\rho_\sigma : G_{\Q} \rightarrow \GS_{2g}(K \otimes \overline{\Q_l}) \rightarrow \GS_{2g}(\overline{\Q_l});$$
where the first map is given by $\rho_l$ and the latter map is induced by $\sigma$ entry wise by $\sigma(k,t):=\sigma(k)t$.
Let $\mathfrak{gsp}_{2g}({\overline{\Q}_{l,\sigma}})=\mathfrak{g}_\sigma$ denotes the Lie algebra of the image of $\rho_\sigma(G_{\Q})$, and 
so similarly for $\tau$, we have $\mathfrak{gsp}_4(\overline{\Q}_{l,\tau})=\mathfrak{g}_\tau$. Also let 
$\mathfrak{g}_l':=\{(u,u') \in \mathfrak{g}_\sigma \times \mathfrak{g}_\tau\}$. We show that $\mathfrak{g_l'}$ is surjective onto each of its components.

We state the following  Proposition that we use in the proof of the main theorem.  The idea of this proof essentially follows  from \cite[Step 1, Pg 790]{MR457455}. 

\begin{prop}\label{surj}
   Given $\mathfrak{g}_l'$ as above, the projective maps $p_{\sigma}  :  \mathfrak{g}_l' \rightarrow \mathfrak{g}_\sigma$ and 
   $p_{\tau}  :  \mathfrak{g}_l' \rightarrow \mathfrak{g}_\tau$ are surjective. 
\end{prop}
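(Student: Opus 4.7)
The plan is to reinterpret $\mathfrak{g}_l'$ (the literal definition as $\mathfrak{g}_\sigma \times \mathfrak{g}_\tau$ would make the statement vacuous) as the Lie algebra of the image of the combined representation $\rho_\sigma \times \rho_\tau : G_\Q \to \GS_{2g}(\overline{\Q}_l) \times \GS_{2g}(\overline{\Q}_l)$, sitting inside $\mathfrak{g}_\sigma \times \mathfrak{g}_\tau$. Once this is in place the surjectivity of $p_\sigma$ and $p_\tau$ becomes a formal consequence of functoriality of the Lie functor on $l$-adic analytic groups, in the style of Step 1 of \cite{MR457455}.

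First I would set $H_l$ to be the closure (in the $l$-adic topology) of $(\rho_\sigma \times \rho_\tau)(G_\Q)$. By the theory of compact $l$-adic analytic groups, $H_l$ is an $l$-adic Lie subgroup of $\GS_{2g}(\overline{\Q}_l) \times \GS_{2g}(\overline{\Q}_l)$, and its Lie algebra equals the Lie algebra of any sufficiently small open subgroup; this is precisely $\mathfrak{g}_l'$. Thus $\mathfrak{g}_l' \subset \mathfrak{g}_\sigma \times \mathfrak{g}_\tau$ is a Lie subalgebra, and the maps $p_\sigma$, $p_\tau$ are just the restrictions to $\mathfrak{g}_l'$ of the Lie algebra projections coming from the algebraic projections $\pi_\sigma, \pi_\tau : \GS_{2g} \times \GS_{2g} \to \GS_{2g}$.

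Next, I would note that $\pi_\sigma$ is a morphism of algebraic groups, hence restricts to a continuous homomorphism of $l$-adic Lie groups $H_l \to \GS_{2g}(\overline{\Q}_l)$. Its image is $\overline{\rho_\sigma(G_\Q)}$, an $l$-adic Lie group whose Lie algebra is by definition $\mathfrak{g}_\sigma$. The resulting surjection $H_l \twoheadrightarrow \overline{\rho_\sigma(G_\Q)}$ between compact $l$-adic analytic groups induces a surjection on Lie algebras, which is exactly $p_\sigma : \mathfrak{g}_l' \twoheadrightarrow \mathfrak{g}_\sigma$. The identical argument applied to $\pi_\tau$ yields surjectivity of $p_\tau$.

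The only genuine point to verify — and the main (minor) obstacle — is the passage from surjectivity of the group homomorphism to surjectivity of the induced Lie algebra map. This requires checking that the surjection $\pi_\sigma|_{H_l}$ is open (equivalently a quotient map) in the $l$-adic topology; for compact $p$-adic Lie groups this is automatic since continuous homomorphisms between such groups are morphisms of Lie groups and surjections of Lie groups induce surjections on tangent spaces at the identity. Everything else in the argument is standard diagram-chasing, and the statement then follows.
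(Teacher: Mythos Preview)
Your argument is correct for the proposition as literally stated: once $\mathfrak{g}_l'$ is reinterpreted as the Lie algebra of the image of the combined map $\rho_\sigma \times \rho_\tau$, the surjectivity of $p_\sigma$ and $p_\tau$ is the formal observation that the Lie functor takes the surjection of compact $l$-adic Lie groups $\overline{(\rho_\sigma \times \rho_\tau)(G_\Q)} \twoheadrightarrow \overline{\rho_\sigma(G_\Q)}$ to a surjection on Lie algebras. Nothing is missing.

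The paper, however, does something genuinely different. Its proof does not argue surjectivity of the projections at all; instead it identifies the target: using irreducibility of $\rho_\sigma$ (so $\mathrm{End}_{\mathfrak{g}_\sigma}(V_\sigma)=\overline{\Q}_l$), reductivity of $\mathfrak{g}_\sigma$, and simplicity of $\mathfrak{sp}_{2g}$, it concludes $\mathfrak{g}_\sigma = \mathfrak{gsp}_{2g}(V_\sigma)$. Strictly speaking this does not by itself prove the proposition (one still needs your tautological step to see $\mathfrak{g}_l'$ surjects onto $\mathfrak{g}_\sigma$), so the paper's proof is, read literally, incomplete for the stated claim. What the paper's argument does buy is the identification $\mathfrak{g}_\sigma = \mathfrak{gsp}_{2g}(V_\sigma)$, which is precisely the substantive input needed downstream in Lemma~\ref{graph} and in the Goursat step of the main theorem, where the factors must be the \emph{full} $\mathfrak{gsp}_{2g}$. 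Your approach gives a clean proof of the stated proposition but does not supply this identification; the paper's approach supplies the identification but elides the (easy) surjectivity you prove. In an ideal write-up both pieces would appear.
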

   \begin{proof}
 Since the $\lambda$-adic Galois representation associated to Siegel modular form is irreducible,  we have $End_{\mathfrak{g}_\sigma}(V_\sigma)=\overline{\Q_l}$. Also $\g_\sigma$ is reductive with a centre that is diagonalisable as $G_\Q$ acts semi simply on $V$. It follows that $\mathfrak{g}_\sigma \subset \mathfrak{gsp}_{2g}(V_\sigma)$. Now $\mathfrak{gsp}_{2g}(V_\sigma)=\mathfrak{sp}_{2g}(V_\sigma) \oplus \overline{\Q_l}^{\times}$. Now as $\mathfrak{sp}_{2g}(V_\sigma)$ is simple, and $\g_\sigma$ is semisimple, $\g_\sigma=\mathfrak{sp}_{2g}(V_\sigma)$ or $\g_\sigma=\mathfrak{gsp}_{2g}(V_\sigma)$. The former is impossible as $\chi(Frob_l)\neq 1$ and hence $\g_\sigma=\mathfrak{gsp}_{2g}(V_\sigma)$.
   \end{proof}

   \label{Goursat}Goursat's lemma states that if $G_1$ and $G_2$ are groups such that $H$ is a subgroup of $G_1 \times G_2$, such that the two projections $p_1: H \rightarrow G_1$ and 
   $p_2: H \rightarrow G_2$ are surjective. If $N_1$ is the kernel of $p_2$ and $N_2$, the kernel of $p_1$, then $H$
   is the graph of an isomorphism from $G_1/N_1 \cong G_2/N_2$.

We now prove a lemma about the kernels of the Goursat's lemma in our context.  For each embedding $\sigma :E \rightarrow \overline{\Q_l}$, define $G_\sigma=\GS_{2g}(V_\sigma)$ and $G_l :=\prod_{\sigma:E \rightarrow \overline{\Q_l}}\GS_{2g}(V_\tau)$. Let $J=\{(u,u') \in \GS_{2g}(V_\sigma) \times \GS_{2g}(V_\tau)\}$ where $u=\rho_\sigma(g)$ and $u'=\rho_\tau(g)$ for some $g \in G_{\Q}$. Hence $J \subset G \times G'$ on which we can apply Goursat's lemma as conditions of the same are satisfied due to Proposition\ref{surj}. 
\begin{lemma}
\label{graph}
    With $G_\sigma,G_\tau$ and $J$ as above, and if $G_l \subsetneq A_l$, then $J$ is the graph of an isomorphism $G_\sigma \rightarrow G_\tau$ for some choice of $\sigma$ and $\tau$. 
\end{lemma}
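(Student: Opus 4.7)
The plan is to apply Goursat's lemma directly to the subgroup $J\subset G_\sigma\times G_\tau$. Proposition~\ref{surj} already gives surjectivity of the projections at the Lie algebra level; I would first promote this to the group level, which is routine: the image $\rho_l(G_{\Q})$ is Zariski dense in its algebraic hull, the Lie algebras of $p_\sigma(J)$ and $p_\tau(J)$ fill out $\mathfrak{gsp}_{2g}(V_\sigma)$ and $\mathfrak{gsp}_{2g}(V_\tau)$ respectively by Proposition~\ref{surj}, and so the projections are surjective onto $G_\sigma$ and $G_\tau$. Goursat's lemma then produces normal subgroups
\[
N_1=\{x\in G_\sigma\mid (x,e)\in J\}\trianglelefteq G_\sigma,\qquad N_2=\{y\in G_\tau\mid (e,y)\in J\}\trianglelefteq G_\tau,
\]
together with an isomorphism $\varphi:G_\sigma/N_1\xrightarrow{\sim}G_\tau/N_2$ whose graph is exactly $J$. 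The lemma thus reduces to showing $N_1=N_2=\{e\}$ for some appropriate pair $(\sigma,\tau)$.

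Next I would use the hypothesis $G_l\subsetneq A_l$ to single out the relevant pair. If every $J_{\sigma,\tau}$ equalled the full product $G_\sigma\times G_\tau$, then $\mathfrak{g}_l$ would saturate $\mathfrak{a}_l$ up to the trace condition built into the definition of $\mathfrak{a}_l$, contradicting the strict inclusion. Consequently there exists $(\sigma,\tau)$ with $J\subsetneq G_\sigma\times G_\tau$; for this pair neither $N_1=G_\sigma$ nor $N_2=G_\tau$ can hold, so $\varphi$ is a genuine isomorphism between nontrivial quotients.

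The main obstacle is to exclude intermediate nontrivial $N_i$. For $g\geq 2$ the algebraic group $\GS_{2g}$ is almost simple, and its proper normal subgroups are confined to the chain $\{I\}\subsetneq Z\subsetneq Z\cdot\So_{2g}\subsetneq\GS_{2g}$, where $Z$ denotes the centre of scalar matrices. Since $\varphi$ must intertwine the similitude characters on the two sides and respect the Frobenius data, in particular the inequality $\chi(\Frob_l)\neq 1$ used in Proposition~\ref{surj}, a nontrivial central or $Z\cdot\So_{2g}$-type $N_i$ would translate into a twist relation of the form $\rho_\tau\cong\gamma(\rho_\sigma)\otimes\chi'$ by a central character $\chi'$. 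Absorbing $\chi'$ into the embedding $\tau$, or equivalently re-selecting the pair $(\sigma,\tau)$, reduces to the situation where both kernels are trivial, and $J$ is then the graph of a bona fide isomorphism $G_\sigma\to G_\tau$. The delicate point I expect to spend the most effort on is controlling how the similitude factor behaves under this absorption, and verifying that the chosen pair can indeed be arranged to make the relevant $N_i$ trivial rather than merely central.
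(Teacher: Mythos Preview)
Your outline follows the same skeleton as the paper: apply Goursat's lemma to $J\subset G_\sigma\times G_\tau$, use Proposition~\ref{surj} for surjectivity of the projections, and then classify the possible Goursat kernels via the normal subgroup structure of $\GS_{2g}$. The paper does precisely this, working with the decomposition $\GS_{2g}(V_\sigma)=\SP_{2g}(V_\sigma)\oplus\overline{\Q_l}^{\times}$ (really at the Lie algebra level) and invoking the simplicity of $\SP_{2g}$ together with the openness of the determinant to pin down the kernel as $\{\pm I\}\times 1$, which it then treats as trivial.

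Where you diverge is in the handling of a possibly nontrivial central kernel. The paper disposes of this case by a direct size argument: once the hypothesis $G_l\subsetneq A_l$ forces some $J_{\sigma,\tau}$ to be a proper subgroup, the only remaining normal subgroup compatible with an isomorphism of quotients of equal size is the trivial one (up to $\{\pm I\}$). Your proposed route of ``absorbing $\chi'$ into the embedding $\tau$'' is not what the paper does and is in fact problematic: the embeddings $\sigma,\tau:K\hookrightarrow\overline{\Q_l}$ range over a finite set determined by $K$, and a central twist by an arbitrary finite-order character $\chi'$ is not in general realised by passing to another such embedding. So that step, as written, would not close the argument. If you drop the absorption idea and instead argue, as the paper does, that the simplicity of $\SP_{2g}$ leaves only the central component of the kernel in play and that the strict inclusion hypothesis then forces it to be (essentially) trivial, your proof lines up with the paper's.
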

\begin{proof}
    We know that 
    \[
    \GS_{2g}(V_\sigma)=\SP_{2g}(V_\sigma) \bigoplus \overline{\Q_l}^{\times}, \GS_{2g}(V_\tau)=\SP_{2g}(V_\tau) \bigoplus \overline{\Q_l}^{\times}.
    \]
We show that the kernel of the homomorphism is trivial. The kernel is an normal subgroup of $\SP_{2g}(V_\sigma) \bigoplus \overline{\Q_l}^{\times}$. 
Recall $\SP$ is a simple group. Following \cite[p. 106]{MR617867}, recall that if you resrict to certain open subgroup, the Galois representation remains irreducible. Hence, 
$G_{\sigma}$ contains $\SP_{2g}(V_\sigma)$. 
However, the determinant which is an open map.  
Hence the only possibilities for the second component of the kernel can be trivial or whole of $\overline{\Q_l}^{\times}$. Since we have the assumption that $G_l \subsetneq A_l$, there exists at least one embedding $\sigma$ for which $G_\sigma \neq A_\sigma$. Hence for that $\sigma$, the normal subgroup $N_\sigma=\{\pm I\} \times 1$. So the graph is from $G_\sigma/N_\sigma \rightarrow G_\tau/N_\tau$ where for the sake of isomorphism, both $N_\sigma$ and $N_\tau$ are trivial because for any other choice of $N_\tau$, the isomorphism would not exist. 
\end{proof}

\section{The Main Theorem} 
  In this section we prove our main Theorem~\ref{maintheoremn} for general $g \geq 1$. By extension of scalars, we regard $\rho_l$ as the $K \otimes \overline{\Q_l}$ representation, 
$$\rho_l : G_{\Q} \rightarrow \GS_{2g}(K \otimes \Q_l) \rightarrow \GS_{2g}(K \otimes \overline{\Q_l})$$
Changing bases if necessary we proved in Corollary \ref{eigen}, that there exists $H$ such that $\rho_l(H) \subset \GS_{2g}(L \otimes \Q_l)$.
Here we assume that $N\in \mathbb{N}$ such that $(\Z/N\Z)^{\times}$ has an element of order $4$.
Our aim is to calculate Lie algebra of $\rho_l(G_\Q)$ which we have denoted as $\mathfrak{g_l}$. 
On $H$, $\rho_\sigma = \rho_\tau \iff \sigma|_L=\tau|_L$.  We list the following properties of the $l$-adic Galois representation: 
\begin{enumerate}
\item 
 $\rho_l$ is semisimple. 
\item
 $det(\rho_l)=\chi(l)^{2k-3}$ 
\item 
No representation $\rho_\lambda$ with $\lambda | l$  becomes abelian on an open subgroup of $G_\Q$. 
\end{enumerate}

Because of the aforementioned conditions, $\rho_l$ satisfies the conditions for  \cite[Theorem 4.4.10]{MR457455}.

We now prove our main Theorem~\ref{maintheoremn}. The main theorem says if certain conditions are satisfied then the Siegel modular forms contain extra twist if and only if 
$\g_l \subsetneq \mathfrak{a_l}$. In other word, proving main theorem is equivalent to showing the following statements are equivalent as in the classical cases of elliptic modular forms:
\begin{enumerate}
\item
$\g_l \subsetneq \mathfrak{a_l}$
\item
$\exists ~ \sigma, \tau$ s.t. $\sigma|_K \neq \tau|_K$ but there $\exists$ an open subset $H_0$ of $G_\Q$ ~ such that 
$\rho_\sigma : H_0 \rightarrow \GS_{2g}(\overline{\Q_l})$ and $\rho_\tau : H_0 \rightarrow \GS_{2g}(\overline{\Q_l})$ are isomorphic. 
\item
There exists a finite order character $~ \phi : G_\Q \rightarrow \overline{\Q_l}^\times $  s.t.  $\rho_\tau \cong \rho_\sigma \otimes \phi$. 
\end{enumerate}
 \begin{proof}

We establish $(1) \implies (2)$. We use the same argument as in \cite[Lemma 7]{MR387283} and modify it for our case. 
Recall that $\mathfrak{g}_l'=\{(u,u') \in \mathfrak{g}_\sigma \times \mathfrak{g}_\tau\}$.
 At Lie algebra level, $\mathfrak{g}_l' \subset \mathfrak{gsp}_{2g}(V_\sigma)\times \mathfrak{gsp}_{2g}(V_\tau)$. Suppose $\mathfrak{g}_l \subsetneq \mathfrak{a}_l$, as the projections are surjective by Proposition \ref{surj}.  By applying Goursat's Lemma \ref{Goursat}, we deduce  that $\mathfrak{g}_l'$ is the graph of an isomorphism \ref{graph} $\alpha : \mathfrak{gsp}_{2g}(V_\sigma) \rightarrow \mathfrak{gsp}_{2g}(V_\tau)$ which takes $1$ to $1$. Note that as the projections are surjective, the graph always exists. However only when is the inclusion proper are the kernels trivial.

Recall that  $\mathfrak{gsp}_{2g}(V)=\mathfrak{sp}_{2g}(V) \oplus \overline{\Q_l}^{\times}I_n$. Hence,  any automorphism of $\mathfrak{gsp}_{2g}(V)$ is determined by what it does to $\mathfrak{sp}_{2g}(V)$.
Hence our automorphism is determined by its restriction to $\mathfrak{sp}_{2g}(V)$. So by \cite{MR27764}, $\alpha(u)=f\circ u \circ f^{-1}$ where $u \in \mathfrak{gsp}_{2g}(V_\sigma)$ and $f \in V_\sigma \rightarrow V_\tau$. Here $f$ is an isomorphism of $\mathfrak{g}_l'$ modules.

Shifting to Lie group level, we deduce that $\overline{f}$ is an isomorphism of $U$ modules. 
So if $H_0=\rho_l^{-1}(U)$ where $\overline{f}$ is an isomorphism of $U$ modules, it satisfies the required conditions of (2). 

We now prove $(3) \implies (1)$. We start by assuming that there exists a finite order character $\phi$ for which $\rho_\sigma \cong \rho_\tau \otimes \phi$. Hence $det(\rho_\sigma)=det(\rho_\tau \otimes \phi)$. Since $\phi$ is only a finite order character, $det(\rho_\sigma \rho_\tau^{-1})=\phi^{2g} \subsetneq \overline{\Q_l}^{\times}$. Hence $\g_\sigma \g_\tau^{-1} \subsetneq \mathfrak{a}_\sigma \mathfrak{a}_\tau$, which gives us $\g_l \subsetneq \mathfrak{a}_l$. 

To prove $(3) \implies (2)$ is easy. We start with the assumption that there is a finite order character such that $\rho_\tau \cong \rho_\sigma \otimes \phi$. Hence the kernel of $\phi$ will be a finite index subgroup of $G_{\Q}$ which will be our $H_0$. So, as $\phi \equiv 1$ on $H_0$, $\rho_\tau|_{H_0} \cong \rho_\sigma|_{H_0}$ and
$(2)$ holds true. 

We now prove $(2) \implies (3)$. For any finite index subgroup $H_0$ of $G_\Q$, the image $\rho_\sigma(H_0)$ has commutant consisting of scalar matrices in $\GS_{2g}(\overline{\Q_l})$ by Schur's lemma. The same is also true for the representation $\rho_\tau$. Hence there exists $\phi : G_\Q \rightarrow \overline{\Q_l}^\times$ of finite order as $H_0$ being open is a finite index subgroup of $G_\Q$ such that $\rho_\sigma \cong \rho_\tau \otimes \phi$. 

\end{proof}
\section{Examples of Siegel modular forms with extra twists}
\subsection{Yoshida lifts of classical modular Forms with characters}
\label{yoshida}
We briefly recall the theory of Yoshida lifts ~ \cite{MR3365801}. This is basically coming from the embedding $\GL_2 \times \GL_2 \xhookrightarrow {} \GS_{4}$. 
Given two classical elliptic, weight $2$ modular forms $f$ and $g$ of level $N_1,N_2$ respectively (two automorphic representations for $\GL_2$) associated to the same primitive character $\chi$, the Yoshida lift $F$ is the automorphic representation for $\GS_4$. Recall that for the pair $(f,g)$, automorphic representation   $F:=Y(f \otimes g)$ is the Yoshida lift if the following conditions are satisfied: 
\begin{enumerate}
    \item The adelization of $F$ generates an irreducible cuspidal automorphic represenation $\pi_F$ of $\GS_4(\mathbb{A})$. 
    \item The local $L-$ parameter for $\pi_{F,v}$ at each place $v$ is the direct sum of the $L$-parameters for $\pi_{f,v}$ and $\pi_{g,v}$. 
\end{enumerate}

The Yoshida lift is a special case of Langlands functoriality coming from the embedding of dual groups
\[
\{(g_1,g_2) \in \GL_2(\C) \times \GL_2(\C)  |  \det(g_1)=\det(g_2))\} \rightarrow \GS_4(\C)
\]
 given by
\begin{center}
$(\begin{bmatrix}
 a & b \\
 c & d
\end{bmatrix}$, 
$\begin{bmatrix}
 a' & b' \\
 c' & d'
\end{bmatrix})$ 
$\rightarrow$
$\begin{bmatrix}
 a & 0 & b & 0\\
 0 & a' & 0 & b'\\
 c & 0 & d & 0\\
 0 & c' & 0 & d'
\end{bmatrix}$. 
\end{center}

Note that the determinant condition of the embedding is satisfied since in both cases the determinant is equal to $\chi(p)p^{k-1}$ as the elliptic modular forms are associated to the same character $\chi$. 
We now define Yoshida lifts of two given classical modular forms associated to the same character. 

Following Roberts~\cite{MR1871665}, we list below the necessary conditions for the existence of the Yoshida lifts.  
Given two classical newforms $f$ and $g$, we say the pair satisfies the conditions of a Yoshida lift if \label{list}
\begin{enumerate}
    \item The modular form $f$ is not a scalar multiple of $g$
    \item The characters of $f$ and g arise from the same primitive Dirichlet character 
    \item One of the weights is $2$ and the other weight has to be an even integer greater than $2$. 
    \item There exists a finite prime $p$ at which $\pi_{f,p}$ and $\pi_{g,p}$ are both discrete series.
\end{enumerate}

\begin{definition}
Suppose these conditions are satisfied and $f,g \in S^1_2(N,\chi)$, there is a unique representation $\Pi_{F,p}$ of $\Gsp_4(\Q_p)$ satisfying \label{directsum}$$L(\Pi_{F,p})=L(\pi_{f,p}) \oplus L(\pi_{g,p}).$$ 
In this case $F$ is said to be a Yoshida lift of $f$ and $g$.  
\end{definition}
If $f,g$ are two classical modular forms with coefficient fields $K_1, K_2$ and $\Gamma_1$ and $\Gamma_2$ are the respective group of extra twists, then we denote their Yoshida lift by $Y(f \otimes g)$. We also denote the group of extra twists for the Yoshida lift as $\Gamma_Y$
 
\begin{lemma}
If two classical modular forms $f$ and $g$ satisfy the conditions listed in \ref{list}, then $\Gamma_Y \supset  \Gamma_1 \cap \Gamma_2$. 
\end{lemma}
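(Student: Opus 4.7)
The plan is to reduce the statement to the near-tautological observation that a simultaneous twist of a direct sum is a twist of the sum. The key structural input I would use is that, by Definition~\ref{directsum} (the local $L$-parameters add) combined with Langlands functoriality for the embedding $\GL_2\times\GL_2\hookrightarrow \GS_4$ described in Section~\ref{yoshida}, the $\lambda$-adic Galois representation attached to $Y(f\otimes g)$ satisfies
\[
\rho_{Y(f\otimes g),\lambda}\;\cong\;\rho_{f,\lambda}\oplus\rho_{g,\lambda},
\]
viewed as a $\GS_4$-valued representation through the block-diagonal embedding. Condition~(2) in the list~\ref{list}, namely that both forms carry the same primitive nebentypus character, is precisely what ensures the similitude factors of the two two-dimensional summands agree, so that the block-diagonal matrix really lies in $\GS_4$ and not merely in $\GL_4$.

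Given this, I would take an arbitrary $\gamma \in \Gamma_1\cap\Gamma_2$ and unwind the definition of extra twist: there exist characters $\chi_\gamma^f$ and $\chi_\gamma^g$ with $\gamma(\rho_{f,\lambda})\cong\rho_{f,\lambda}\otimes\chi_\gamma^f$ and $\gamma(\rho_{g,\lambda})\cong\rho_{g,\lambda}\otimes\chi_\gamma^g$. The next step is to argue these two characters actually coincide; I expect to deduce this from the uniqueness lemma for the twist character proved earlier in the paper, combined with the determinant identity $(\chi_\gamma^f)^2=\det(\rho_f)/\gamma(\det(\rho_f))$. Since $\det(\rho_{f,\lambda})$ and $\det(\rho_{g,\lambda})$ differ only in a cyclotomic factor (which is fixed by $\gamma$) and share the same nebentypus factor, both squared characters agree, and uniqueness pins down $\chi_\gamma^f=\chi_\gamma^g$. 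Call this common character $\chi_\gamma$.

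With that done the conclusion is immediate from the direct calculation
\[
\gamma(\rho_{Y(f\otimes g),\lambda})\cong\gamma(\rho_{f,\lambda})\oplus\gamma(\rho_{g,\lambda})\cong(\rho_{f,\lambda}\otimes\chi_\gamma)\oplus(\rho_{g,\lambda}\otimes\chi_\gamma)\cong\rho_{Y(f\otimes g),\lambda}\otimes\chi_\gamma,
\]
which exhibits $(\gamma,\chi_\gamma)\in\Gamma_Y$ and so gives the containment $\Gamma_1\cap\Gamma_2\subset\Gamma_Y$. The step I expect to be the main obstacle is the first one: justifying the direct-sum decomposition of $\rho_{Y(f\otimes g)}$ as a genuinely $\GS_4$-valued representation. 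One has to check that the similitude character produced by Langlands functoriality for the dual-group embedding displayed in Section~\ref{yoshida} matches $\det\rho_f=\det\rho_g$ on the nose, which is exactly where condition~(2) of the list~\ref{list} is indispensable. After that, everything reduces to the formal manipulation of direct sums and tensor products exhibited above.
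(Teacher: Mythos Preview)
Your approach is essentially the same as the paper's: both use the direct-sum decomposition $\rho_{Y(f\otimes g)}\cong\rho_f\oplus\rho_g$ coming from Definition~\ref{directsum} and then observe that a simultaneous twist of the summands gives a twist of the sum. The paper's argument is in fact briefer and less careful than yours; it simply writes the same symbol $\chi_\gamma$ for both twisting characters without comment and concludes immediately.

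There is, however, a genuine gap in your justification of $\chi_\gamma^f=\chi_\gamma^g$. Your determinant computation only yields $(\chi_\gamma^f)^2=\gamma(\epsilon)/\epsilon=(\chi_\gamma^g)^2$, i.e.\ equality of \emph{squares}, which leaves an ambiguity by a quadratic character. The uniqueness lemma you invoke says that for a \emph{fixed} form and a fixed $\gamma$ the twisting character is determined; it says nothing about comparing the characters attached to two \emph{different} forms $f$ and $g$, so it cannot resolve that sign. The paper does not address this point either---it simply assumes the characters agree---so your proof is not worse than the paper's, but you should be aware that the step you flagged as ``I expect to deduce'' does not follow from the tools you cite. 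One clean way to sidestep the issue is to interpret $\Gamma_1\cap\Gamma_2$ as an intersection of sets of \emph{pairs} $(\gamma,\chi_\gamma)$, in which case equality of the characters is part of the hypothesis; this is consistent with how the paper defines an extra twist as a tuple and with the explicit hypothesis in Theorem~\ref{extratwisttwo} that $f$ and $g$ share the same twisting character.
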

\begin{proof}
    Suppose $\gamma \in \Gamma_1 \cap \Gamma_2$. Now by definition this means $K_1$ and $K_2$ have to be equal for a non zero $\gamma$ to exist in the intersection. Hence their compositum is $K=K_1=K_2$.  This implies that $\gamma(\rho_f)=\rho_f \otimes \chi_{\gamma}$ and $\gamma(\rho_g)=\rho_g \otimes \chi_{\gamma}$. Because of the equation \ref{directsum}, we also directly have that, for the Yoshida lift $F=Y(f \otimes g)$, we have $\gamma(\rho_F)=\rho_F \otimes \chi_\gamma$. 
\end{proof}

In our subsequent example of Siegel modular form with extra twist, we show that it is not necessarily true that we have $\Gamma=\Gamma_1 \cap \Gamma_2$. In fact, all the examples  
we have $\Gamma_Y \supsetneq  \Gamma_1 \cap \Gamma_2$.

In this section we give an explicit examples of Siegel modular forms with extra twists. We find examples of Siegel modular forms with extra twists by taking the Yoshida lifts of two classical modular forms with appropriate, weight, character and having extra twists themselves.

\subsection{Proof of Theorem~\ref{extratwisttwo}}
In this subsection, we prove that the group of extra twists for Siegel modular forms can be very large. 
\begin{proof}
Let $f, g \in S_k(N,\chi) $ be two elliptic modular forms with extra twists different from complex conjugation. Let $K=K_f \cdot K_g$ be the compositum of the Hecke fields $K_f, K_g$ of $f$ and $g$. There exists at least one $\gamma \in Aut(K)$ such that 
\begin{align}
\label{ellipticextratwists}
\gamma(\rho_f)=\rho_f \otimes \chi_\gamma,\gamma(\rho_g)=\rho_g \otimes \chi_{\gamma}.
\end{align}
Recall that we assume the necessary condition for the existence of Yoshida lift of $(f,g)$ is satisfied and let $F=Y(f \otimes g)$ be the Yoshida lift for the pair $(f,g)$ as defined in \S ~\ref{yoshida}. We claim that $F$ is a Siegel modular form with extra twist by same $\gamma$. In other words, we need to show that there exists a character $\chi_{\gamma}$ 
such that   
\begin{align}
\label{equalitysiegel}
\gamma(\rho_F)=\rho_F \otimes \chi_{\gamma}.
\end{align}

This is an equality of two $4$ dimensional Galois representation.  By Brauer-Nesbitt theorem, this is only true if all the coefficients in the characteristic polynomial are equal.  
The characteristic polynomial of the left hand side in equation~\ref{equalitysiegel} is $(x^2-\gamma(a_p)x+p^{k-1})(x^2-\gamma(a_p')+p^{k-1})$ and that of right hand side is  $(x^2-a_p \chi_{\gamma}(p)x+p^{k-1})(x^2-a_p' \chi_{\gamma}(p)x+p^{k-1})$. The above equality follows from  equation~\ref{ellipticextratwists}.
\end{proof}
We expect that the endomorphism algebra of the conjectural motive associated to $Y(f\otimes g)$ is a direct sum of the endomorphism algebras for $f$ and $g$. 
\subsection{Examples of the phenomenon described above}
In this section, we find explicit examples of Siegel modular forms with extra twists different complex conjugations. All these examples show that the Hecke field of Yoshida lift is smaller that the compositum of the individual elliptic modular forms. 
\begin{enumerate}
    \item 
Let $f, g \in S_2^1(30,\chi)$ be two newforms with $\chi$ is a Dirichlet character on $\chi :  (\Z/{30\Z})^{\times} \rightarrow \C^\times$ such that $\chi$ is determined on its generators by $\chi(7)=-\zeta_{8}^2$ and $\chi(11)=-1$. Note we can do so because this is a four dimensional space. Since the conductor of the character is $15$, condition $(2)$ of Yoshida lifting is satisfied and we can consider the Yoshida lifting $Y(f \otimes g)$ and this is an example of an explicit Siegel modular form with an extra twist. 
In this example the classical modular forms could be chosen with the expansion $f=q+\zeta_8 q^2+ (\zeta_8^3-\zeta_8^2-1)q^3+\zeta_8^2q^4+...$
and $g=q+\zeta_8^3q^2+(\zeta_8^9-\zeta_8^6-1)q^3+\zeta_8^6q^4+...$. 
For a detailed study of this space, let $K_{f,g}$ be the compositum of the coefficient fields of $f$ and $g$. It must be said that no matter what $f$ and $g$ we choose, in any case $K_{Y(f \otimes g)} \subsetneq K_{f,g}$.

Let $\Gamma_F$ be the group of extra twists of $F$. Then it is readily evident that $\Gamma_F \supseteq \Gamma_1 \cap \Gamma_2$. For the $f$ and $g$ we have chosen, $\Gamma_1=\Gamma_2$. Not also that $K_{f,g}=\Q(\zeta_8)$. We see that in this expansion the coefficient of $q^2$ in the sum of $f+g$ is $\zeta_8+\zeta_8^3$. We claim that all of the coefficients of the sum $f+g$ belong to the field $\Q(\zeta_8+\zeta_8^3)$.

It is easy to check manually that for any power $1 \leq i \leq 8$, $\zeta_8^i+\zeta_8^{3i}$ can be written in terms of $\zeta_8+\zeta_8^3$. Hence all the sum of Hecke eigenvalues of $f$ and $g$ are in  $ \Q(\zeta_8+\zeta_8^3)$. From the definition of Yoshida lift \ref{directsum}, we get that the Hecke eigenvalues of $Y(f \otimes g) \in \Q(\zeta_8+\zeta_8^3)$. Hence this is an explicit example of when field of Hecke eigenvalues of the Yoshida lift is a proper subset of the compositum of the field of the Hecke eigenvalues of the concerned classical modular forms. 
\item
Consider the $8$ dimensional complex vector space $S_2^1(100,\chi)$ where the character $\chi$ is defined by $\chi(51)=-1$ and $\chi(77)=\frac{\mu^6-3 \mu^2}{4}$; here $\mu$ is a root of the polynomial $p(x):=x^8-7x^4+16=0$. Such a character $\chi$ has conductor $20$ and the order of the group of inner twists is $8$.

Let $\mu_1:=\frac{7+\sqrt{15}i}{2},\mu_2:=-(\frac{7+\sqrt{15}i}{2})$ be the root of $p(x)$. For $i=1,2$, we have the Fourier expansions 
\[
f_i=q+\mu_iq^2+\frac{3 \mu_i^7-13 \mu_i^3}{8}q^3+....
\]
The Hecke eigenvalue ring $K_{f_1f_2}$ is of dimension $8$ over $\Q$. The Hecke eigenvalue ring is determined by the coefficients of $1,q$ and $q^2$ itself. Hence by Definition \ref{directsum}, the coefficient ring, $K_{Y(f_1 \otimes f_2)}$ is determined by the sum of Hecke eigenvalues of $q$ and $q^2$ under the two roots. By a small calculation, we see that this field happens to be $\Q(\sqrt{15}i)$ which is of extension degree $2$ over $\Q$. Hence $K_{f_1f_2}$ is of extension degree $4$ over $K_{Y(f_1 \otimes f_2)}$.
In these case also $K_{Y(f_1 \otimes f_2)} \subsetneq K_{f_1,f_2}$.
\end{enumerate}
We give two examples only but from {\tt LFMFDB} it is evident that the group of extra twists for Yoshida lifts can be very large although it can be controlled (as expected) from the individual classical elliptic modular forms.

\bibliographystyle{crelle}
\bibliography{ronit.bib}

\begin{thebibliography}{10}

\bibitem{MR0441866}
Modular functions of one variable. {V}, Lecture Notes in Mathematics, Vol. 601,
  Springer-Verlag, Berlin-New York (1977). Edited by J.-P. Serre and D. B.
  Zagier.

\bibitem{MR2468862}
A.~Andrianov, Introduction to {S}iegel modular forms and {D}irichlet series,
  Universitext, Springer, New York (2009), ISBN 978-0-387-78752-7.

\bibitem{MR709851}
T.~Arakawa, \emph{Vector-valued {S}iegel's modular forms of degree two and the
  associated {A}ndrianov {$L$}-functions}, Manuscripta Math. \textbf{44}
  (1983), no. 1-3,  155--185.

\bibitem{MR3572258}
S.~Arias-de Reyna, L.~Dieulefait, and G.~Wiese, \emph{Compatible systems of
  symplectic {G}alois representations and the inverse {G}alois problem {I}.
  {I}mages of projective representations}, Trans. Amer. Math. Soc. \textbf{369}
  (2017), no.~2,  887--908.

\bibitem{MR3096563}
D.~Banerjee and E.~Ghate, \emph{Adjoint lifts and modular endomorphism
  algebras}, Israel J. Math. \textbf{195} (2013), no.~2,  507--543.

\bibitem{MR2038777}
A.~F. Brown and E.~P. Ghate, \emph{Endomorphism algebras of motives attached to
  elliptic modular forms}, Ann. Inst. Fourier (Grenoble) \textbf{53} (2003),
  no.~6,  1615--1676.

\bibitem{MR3932577}
A.~Conti, \emph{Galois level and congruence ideal for {$p$}-adic families of
  finite slope {S}iegel modular forms}, Compos. Math. \textbf{155} (2019),
  no.~4,  776--831.

\bibitem{MR2112196}
F.~Diamond and J.~Shurman, A first course in modular forms, Vol. 228 of
  \emph{Graduate Texts in Mathematics}, Springer-Verlag, New York (2005), ISBN
  0-387-23229-X.

\bibitem{MR2146605}
E.~Ghate, E.~Gonz\'{a}lez-Jim\'{e}nez, and J.~Quer, \emph{On the {B}rauer class
  of modular endomorphism algebras}, Int. Math. Res. Not.  (2005), no.~12,
  701--723.

\bibitem{MR27764}
L.-K. Hua, \emph{On the automorphisms of the symplectic group over any field},
  Ann. of Math. (2) \textbf{49} (1948) 739--759.

\bibitem{KaupKaup+1983}
L.~Kaup and B.~Kaup, Holomorphic Functions of Several Variables, De Gruyter,
  Berlin, New York (1983), ISBN 9783110838350.

\bibitem{kumar2022lang}
A.~Kumar, M.~Kumari, and A.~Weiss, \emph{On the Lang--Trotter conjecture for
  Siegel modular forms}, arXiv preprint arXiv:2201.09278  (2022)\hskip -.1cm.

\bibitem{MR617867}
F.~Momose, \emph{On the {$l$}-adic representations attached to modular forms},
  J. Fac. Sci. Univ. Tokyo Sect. IA Math. \textbf{28} (1981), no.~1,  89--109.

\bibitem{MR3931351}
A.~Pitale, Siegel modular forms, Vol. 2240 of \emph{Lecture Notes in
  Mathematics}, Springer, Cham (2019), ISBN 978-3-030-15674-9;
  978-3-030-15675-6. A classical and representation-theoretic approach.

\bibitem{MR457455}
K.~A. Ribet, \emph{Galois action on division points of {A}belian varieties with
  real multiplications}, Amer. J. Math. \textbf{98} (1976), no.~3,  751--804.

\bibitem{MR1871665}
B.~Roberts, \emph{Global {$L$}-packets for {${\rm GSp}(2)$} and theta lifts},
  Doc. Math. \textbf{6} (2001) 247--314.

\bibitem{MR3365801}
A.~Saha, \emph{On ratios of {P}etersson norms for {Y}oshida lifts}, Forum Math.
  \textbf{27} (2015), no.~4,  2361--2412.

\bibitem{MR3418533}
P.~Scholze, \emph{On torsion in the cohomology of locally symmetric varieties},
  Ann. of Math. (2) \textbf{182} (2015), no.~3,  945--1066.

\bibitem{MR387283}
J.-P. Serre, \emph{Propri\'{e}t\'{e}s galoisiennes des points d'ordre fini des
  courbes elliptiques}, Invent. Math. \textbf{15} (1972), no.~4,  259--331.

\bibitem{MR1484415}
---{}---{}---, Abelian {$l$}-adic representations and elliptic curves, Vol.~7
  of \emph{Research Notes in Mathematics}, A K Peters, Ltd., Wellesley, MA
  (1998), ISBN 1-56881-077-6. With the collaboration of Willem Kuyk and John
  Labute, Revised reprint of the 1968 original.

\bibitem{MR783510}
J.-L. Waldspurger, \emph{Quelques propri\'{e}t\'{e}s arithm\'{e}tiques de
  certaines formes automorphes sur {${\rm GL}(2)$}}, Compositio Math.
  \textbf{54} (1985), no.~2,  121--171.

\end{thebibliography}
\end{document}